\tikzstyle{vertex}=[circle,draw=black,fill=black,inner sep=0,minimum size=3pt,text=white,font=\footnotesize]
\newtheorem*{thm*}{Theorem}
\newtheorem{thm}{Theorem}
\newtheorem{lemma}[thm]{Lemma}
\newtheorem{proposition}[thm]{Proposition}
\newtheorem{defin}{Definition}
\newtheorem{clm}[thm]{Claim}
\newtheorem*{proposition*}{Proposition}
\newcommand\cF{{\mathcal F}}
\newcommand\cH{{\mathcal H}}
\newcommand\cK{{\mathcal K}}
\newcommand\cL{{\mathcal L}}
\newcommand{\ignore}[1]{}
\title{Ramsey problems for Berge hypergraphs
}
\begin{document}

\author{D\'aniel Gerbner \thanks{Alfr\'ed R\'enyi Institute of Mathematics, Hungarian Academy of Sciences. e-mail: gerbner@renyi.hu}
\qquad
Abhishek Methuku \thanks{\'Ecole Polytechnique F\'ed\'erale de Lausanne. e-mail: abhishekmethuku@gmail.com}
\qquad
Gholamreza Omidi \thanks{Isfahan University of Technology and Institute for Research in Fundamental Sciences (IPM), Isfahan. e-mail: romidi@cc.iut.ac.ir}
\qquad
M\'at\'e Vizer \thanks{Alfr\'ed R\'enyi Institute of Mathematics, Hungarian Academy of Sciences. e-mail: vizermate@gmail.com.}}

\maketitle

\begin{abstract}
For a graph $G$, a hypergraph $\cH$ is a Berge copy of $G$ (or a Berge-$G$ in short), if there is a bijection $f : E(G) \rightarrow  E(\mathcal{H})$ such that for each $e \in E(G)$ we have $e \subseteq f(e)$. We denote the family of $r$-uniform hypergraphs that are Berge copies of $G$ by $B^rG$.

For families of $r$-uniform hypergraphs $\mathbf{H}$ and $\mathbf{H}'$, we denote by $R(\mathbf{H},\mathbf{H}')$ the smallest number $n$ such that in any red-blue coloring of the (hyper)edges of $\cK_n^r$ (the complete $r$-uniform hypergraph on $n$ vertices) there is a monochromatic blue copy of a hypergraph in $\mathbf{H}$ or a monochromatic red copy of a hypergraph in $\mathbf{H}'$. $R^c(\mathbf{H})$ denotes the smallest number $n$ such that in any coloring of the hyperedges of $\cK_n^r$ with $c$ colors, there is a monochromatic copy of a hypergraph in $\mathbf{H}$.

In this paper we initiate the general study of the Ramsey problem for Berge hypergraphs, and show that if $r> 2c$, then $R^c(B^rK_n)=n$. In the case $r = 2c$, we show that $R^c(B^rK_n)=n+1$, and if $G$ is a non-complete graph on $n$ vertices, then $R^c(B^rG)=n$, assuming $n$ is large enough. In the case $r < 2c$ we also obtain bounds on $R^c(B^rK_n)$. Moreover, we also determine the exact value of $R(B^3T_1,B^3T_2)$ for every pair of trees $T_1$ and $T_2$. 
\end{abstract}

\section{Introduction}

Ramsey's theorem states that for any graph $G$ (or any $r$-uniform hypergraph $\cH$) and integer $c$, there exists $N$ such that if we color each edge of the complete graph (or each hyperedge of the complete $r$-uniform hypergraph) on $N$ vertices with one of $c$ colors, then there is a monochromatic copy of $G$ (resp. $\cH$). This is the starting point of a huge area of research, see \cite{cfs} for a recent survey. Determining Ramsey numbers (the smallest integer $N$ with such a property) is a major open problem in combinatorics even for small particular graphs.

Let us introduce some basic definitions and notation. For graphs we denote by $K_n$ the complete graph on $n$ vertices, by $P_n$ the path on $n$ vertices and by $S_n$ the star with $n$ vertices.
A double star is a tree which has exactly two non-leaf vertices. We will often talk about graphs and hypergraphs as well. To make it easier to distinguish them, we use the word edge only in case of graphs, while we use the word hyperedge in case of larger uniformity.

We are going to deal with colorings of the hyperedges of $r$-uniform hypergraphs, $r$-graphs in short. A \textit{$c$-coloring} of a hypergraph is a coloring of its hyperedges with colors $1,2,\dots, c$. Each hyperedge gets exactly one color, but it is allowed that a color is not used at all. More precisely the coloring is a function $f$ from the set of hyperedges to the set $\{1,\dots, c\}$. We call a hypergraph $\cH$ with such a function $f$ a $c$-colored $\cH$ and denote it by $(\cH,f)$.
In the case of two colors sometimes we will call the colors blue and red to make it easier to follow the arguments, and call the 2-colored $\cH$ simply blue-red $\cH$. We will be interested in hypergraphs such that all their hyperedges are of the same color; we call them \textit{monochromatic}. In case we also know the color, say blue, we simply call a hypergraph which only has blue hyperedges \textit{monoblue}. We will denote the complete $r$-uniform hypergraph on $n$ vertices by $\cK_n^r$. 

For families of $r$-uniform hypergraphs $\mathbf{H_1},\mathbf{H_2},\dots,\mathbf{H_c}$, we denote by $R(\mathbf{H_1},\mathbf{H_2},\dots,\mathbf{H_c})$ the smallest number $n$ such that in any $c$-coloring of $\cK_n^r$, there is an $i\le c$ such that there is a monochromatic copy of a hypergraph in $\mathbf{H_i}$ of color $i$. If $\mathbf{H_1}=\mathbf{H_2}=\dots=\mathbf{H_c}=\mathbf{H}$, then $R(\mathbf{H_1},\mathbf{H_2},\dots,\mathbf{H_c})$ is denoted by $R^c(\mathbf{H})$.

The classical definition of a hypergraph cycle is due to Berge. Extending this definition, Gerbner and Palmer \cite{gp1} defined the following. For a graph $G$, a hypergraph $\cH$ is a \textit{Berge copy} of $G$ (a Berge-$G$, in short), if there is a bijection
$g : E(G) \rightarrow  E(\mathcal{H})$ such that for $e \in E(G)$ we have $e \subset g(e)$. In other words, $\mathcal{H}$ is a Berge-$G$ if we can embed a distinct graph edge into each hyperedge of $\mathcal{H}$ to create a copy of the graph $G$ on the vertex set of $\mathcal{H}$. We denote the family of $r$-uniform hypergraphs that are Berge copies of $F$ by $B^rF$. Extremal problems for Berge hypergraphs have attracted the attention of many researchers, see e.g. \cite{gmp,gmv,gp1,gp2,gykl,gyl,gymstv,pttw}. 

In this paper, we initiate the general study of Ramsey problems for Berge hypergraphs. We note that similar investigations have been started very recently and independently by Axenovich and Gy\'arf\'as \cite{agy} and by Salia, Tompkins,  Wang and Zamora \cite{STZZ}.  In \cite{agy} the authors focus on small fixed graphs where the number of colors may go to infinity. They also consider the non-uniform version. In \cite{STZZ} the authors focus mainly on the case of two colors.

Ramsey problems for Berge cycles have been well-studied. This line of research was initiated by Gy\'arf\'as, Lehel, S\'ark\"ozy and Schelp \cite{gylss}. They conjectured that $R^{r-1}(B^rC_n)=n$ for $n$ large enough, and proved it for $r=3$. Gy\'arf\'as and S\'ark\"ozy \cite{gys} proved $R^3(B^3C_n)=(1+o(1))5n/4$. Gy\'arf\'as, S\'ark\"ozy and Szemer\'edi \cite{gyssz} proved that $R^3(B^4C_n)\le n+10$ for $n$ large enough, and they proved $R^{r-1}(B^rC_n)=(1+o(1))n$ in \cite{gyssz2}. Maherani and Omidi \cite{mo} proved $R^3(B^4C_n)=n$ for $n$ large enough, and finally Omidi \cite{reza} proved the conjecture of  Gy\'arf\'as, Lehel, S\'ark\"ozy and Schelp \cite{gylss} by showing $R^{r-1}(B^rC_n)=n$ for $n$ large enough.  In this paper we are also mainly interested in $R(B^rF,B^rG)$ in the case when the number of vertices in both $F$ and $G$ are large enough.

A related problem is covering the vertices of a hypergraph by some monochromatic structures. This has also been considered in the Berge sense \cite{gmov,gys2, BCF}.

Note that $R^c(B^rF)$ is monotone decreasing in $r$, thus the known upper bounds for the graph case imply the same bounds for larger $r$. However, those bounds can be exponential in the cardinality of the vertex set of $F$. We believe that for $r\ge 3$ the situation is completely different and that the Ramsey number is always polynomial in $|V(F)|$. 





\subsection*{New results}
Our results are divided into three main types: $r > 2c$, $r = 2c$ and $ r < 2c$, as stated below. In the first two cases we almost completely resolve the problem.

\begin{proposition}
\label{r>2c}
Suppose $r> 2c$, and $n>c+c\binom{r}{2}$. Then $R^c(B^rK_n)=n$.
\end{proposition}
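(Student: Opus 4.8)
The plan is to prove the two inequalities $R^c(B^rK_n)\ge n$ and $R^c(B^rK_n)\le n$ separately, the first being essentially trivial and the second being the content of the proposition.

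For the lower bound $R^c(B^rK_n)\ge n$: any Berge-$K_n$ has at least $n$ vertices, since it must contain $n$ distinct vertices in the core copy of $K_n$ witnessing the Berge structure. Hence coloring $\cK_{n-1}^r$ with a single color (say color $1$) produces no monochromatic Berge-$K_n$ at all, so $R^c(B^rK_n)>n-1$.

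For the upper bound $R^c(B^rK_n)\le n$: the idea is to take an arbitrary $c$-coloring $f$ of $\cK_n^r$ and directly construct a monochromatic Berge-$K_n$ on all $n$ vertices. Since $r>2c$, the number of hyperedges containing a fixed pair $\{x,y\}$ is $\binom{n-2}{r-2}$, which is large. The key combinatorial point is that we must assign to each of the $\binom{n}{2}$ pairs a distinct hyperedge of a single color containing that pair. I would fix the most popular color, say color $1$, used on some hyperedge set $E_1$ with $|E_1|\ge \binom{n}{r}/c$. Then I would set up a bipartite graph between the pairs $\binom{[n]}{2}$ and the color-$1$ hyperedges $E_1$, joining a pair to a hyperedge that contains it, and look for a perfect matching saturating all pairs; such a matching is exactly a Berge-$K_n$ in color $1$. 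To find it I would verify Hall's condition: for any set $P$ of pairs, the hyperedges in $E_1$ meeting at least two of those pairs must number at least $|P|$. The condition $r>2c$ is exactly what should make the neighborhood large enough — intuitively, a random color-$1$ hyperedge of size $r>2c$ contains about $\binom{r}{2}$ pairs and there are enough such hyperedges that every pair is covered many times over. Alternatively, and perhaps more cleanly, I would argue greedily or probabilistically: order the pairs, and for each pair in turn pick an unused color-$1$ hyperedge containing it; a counting argument using $r>2c$ (and $n$ large) guarantees that the number of color-$1$ hyperedges through any pair always exceeds the number already used.

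The main obstacle I anticipate is making the Hall / greedy counting tight enough to use only the hypothesis $r>2c$ rather than, say, $r$ much larger than $c$; this is presumably where the "$n$ large enough" hypothesis is spent, absorbing lower-order terms. One clean route: since $r\ge 2c+1$, in any $c$-coloring of $\cK_r^r$ restricted to an $r$-set there is a color class with at least $\lceil \binom{r}{2}/c\rceil \ge \binom{r}{2}/c$ pairs inside the unique hyperedge — but that only handles single hyperedges, so instead one should work globally: count, for a fixed pair $\{x,y\}$, the color-$1$ hyperedges containing it, namely roughly $\binom{n-2}{r-2}/c$ in expectation over which color is "popular," and show this dominates $\binom{n}{2}$ (the total number of pairs, hence an upper bound on how many hyperedges could already be committed) once $n$ is large — here $\binom{n-2}{r-2}/c \gg \binom{n}{2}$ precisely when $r-2\ge 1$, i.e., always for $r\ge 3$, so in fact the room is enormous and the delicate part is only bookkeeping to ensure distinctness across all pairs simultaneously. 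I expect the write-up to choose the Hall-type formulation and discharge the verification in a short paragraph.
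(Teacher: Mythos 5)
Your lower bound is fine, and your general framework for the upper bound (pairs vs.\ hyperedges of one color, Hall's condition, each hyperedge covering at most $\binom{r}{2}$ pairs) matches the machinery the paper isolates as Lemma \ref{philip}. But there is a genuine gap at the crucial step: how the single color is chosen. Fixing the globally most popular color $1$ with $|E_1|\ge \binom{n}{r}/c$ gives you no control whatsoever over individual pairs. An adversarial coloring can make \emph{every} hyperedge containing some fixed pair $\{x,y\}$ avoid color $1$ while color $1$ remains the most used color overall; then the neighborhood of the singleton $P=\{\{x,y\}\}$ in your bipartite graph is empty and Hall fails immediately. Your ``in expectation over which color is popular'' count is not a valid argument, since the coloring is adversarial, and the whole difficulty of the proposition is precisely to find \emph{one} color that is good for \emph{all} $\binom{n}{2}$ pairs simultaneously. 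Nothing in your write-up uses $r>2c$ in a way that addresses this; the counting you sketch would apply just as well to $r=3$, $c=100$, where the statement is false.

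The paper's resolution is short but is exactly the idea you are missing. Call a pair $uv$ good for color $i$ if it lies in at least $\binom{r}{2}$ hyperedges of color $i$; if some color $i$ is good for all pairs, Lemma \ref{philip} yields a monochromatic Berge-$K_n$ in color $i$. Otherwise, for each color $i$ pick a bad pair $u_iv_i$, and let $U=\{u_1,v_1,\dots,u_c,v_c\}$. Since $|U|\le 2c<r$, there are at least $n-2c$ hyperedges containing all of $U$, and once $n>2c+c\binom{r}{2}$ some color $i$ occurs on at least $\binom{r}{2}$ of them; but each such hyperedge contains $u_iv_i$, contradicting that $u_iv_i$ was bad for color $i$. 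This is where $r>2c$ and ``$n$ large'' are actually spent: all $c$ bad pairs must fit inside a single $r$-set. Without this (or an equivalent) color-selection argument, your proof does not go through.
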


To state our result in case $r=2c$, we need a definition.
We say that a graph $G$ on at most $n$ vertices is \textit{good} if 
its vertex set can be partitioned into two parts $V_1$ and $V_2$ with the following properties: $|V_1|=r-2$ and for any set $U\subset V_2$, the sum of the degrees of the vertices in $U$ is at most $|U|(n-r+2)-\binom{|U|}{2}$.

\begin{thm}\label{r = 2c} Let $r=2c\ge 4$ and $n\ge 12c(t-1)$. Let $n=|V(G_1)|\ge |V(G_2)|\ge \dots \ge |V(G_c)|$. Then 
\[ R(B^rG_1,\dots,B^rG_c)= \begin{cases} 
     n+1 & \textup{if one of the $G_i$s is not good and the remaining $G_i$ are $K_n$.} \\[1em]
      n & \textup{otherwise.}
   \end{cases}
\]

\end{thm}



Note that in the special case $c=2$, Proposition \ref{r>2c} and Theorem \ref{r = 2c} imply some of the results of Salia, Tompkins, Wang and Zamora  \cite{STZZ}.

\begin{thm}
\label{r < 2c}
Suppose $r < 2c$. Then we have
\begin{enumerate}
\item [\textbf{(i)}] If $2<r$, then $1+c\lfloor\frac{n-2}{c-1}\rfloor\le R^c(B^rK_n)$.

\item [\textbf{(ii)}] If $c<r$, then $R(B^rK_{n_1}, \dots,B^rK_{n_c})\le \sum_{i=1}^c n_i$.

\item [\textbf{(iii)}]  If $c+1<r$ and $n>\frac{r-c-1}{c}(\binom{r}{2}+r-1)$, then $R^c(B^rK_{n}) \le \frac{c}{r-c-1}n$.

\item [\textbf{(iv)}] If $r=2c-1$ and $n>\frac{2c-3}{2c-1}(\binom{r}{2}+r-1)$, then $R^c(B^rK_{n}) \le (2c-1)\frac{n}{c-3}$.

\end{enumerate}
\end{thm}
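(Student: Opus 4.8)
I would attack the four parts by separate arguments, tied together by one elementary remark: since $r<2c$, an $r$-element set cannot meet each of $c$ pairwise disjoint blocks in two or more vertices, so it meets at least one block in at most one vertex. For part (i) this gives the lower-bound construction directly. Put $m=\lfloor\frac{n-2}{c-1}\rfloor$ and $N=cm$, partition $[N]$ into blocks $V_1,\dots,V_c$ of size $m$, and colour each hyperedge $e$ with the least index $i$ for which $|e\cap V_i|\le 1$ (well defined by the remark). If colour $i$ contained a Berge-$K_n$ on a vertex set $S$ with $|S|=n$, then any two vertices of $S$ lying in $V_i$ would be joined by a colour-$i$ hyperedge meeting $V_i$ in two vertices — impossible; hence $|S\cap V_i|\le 1$ and $n=|S|\le 1+(c-1)m\le 1+(n-2)<n$, a contradiction. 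So this colouring of $\cK_N^r$ has no monochromatic Berge-$K_n$, giving $R^c(B^rK_n)\ge N+1=1+c\lfloor\frac{n-2}{c-1}\rfloor$; largeness of $n$ is used only to make $m\ge 1$.

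For the upper bounds (ii)--(iv) I would use the following device, which removes the usual difficulty of ensuring the representing hyperedges are distinct: \emph{if $S$ is an $n$-set, $\chi$ a colour, and every pair $\{u,v\}\subseteq S$ lies in some colour-$\chi$ hyperedge $e_{uv}$ with $e_{uv}\cap S=\{u,v\}$, then the $e_{uv}$ are pairwise distinct (each meets $S$ in exactly its own pair) and form a colour-$\chi$ Berge-$K_n$.} Thus it suffices to produce one such pair $(S,\chi)$. For part (ii) take $N=\sum_i n_i$ (so $N\ge 2c>r$, and every pair already lies in a hyperedge); discard any unused colour, and suppose no colour $i$ admits a Berge-$K_{n_i}$. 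Then each colour class is a Berge-$K_{n_i}$-free $r$-graph on $N$ vertices, and I would finish by showing $\sum_{i=1}^c \ex_r(N,B^rK_{n_i})<\binom Nr$, which is impossible since the classes partition $\cK_N^r$. This rests on a Tur\'an-type upper bound for Berge cliques together with the strict superadditivity $\sum_i x_i^{\,r-1}<\big(\sum_i x_i\big)^{r-1}$ valid for $r\ge 3$, the value $N=\sum_i n_i$ being exactly the borderline; alternatively one grows the monochromatic clique greedily via the reformulation using a defect form of Hall's theorem.

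For (iii) and (iv) the vertex budget is too tight for that crude count, so the reformulation must be used quantitatively: fix a colour $\chi$, grow $S$ one vertex at a time, and maintain a stock of colour-$\chi$ hyperedges $\{u,v\}\cup A$ with $\{u,v\}\in\binom S2$ and $A$ disjoint from $S$ that is large enough to absorb the conflicts created by the next enlargement. If this growth halts before $|S|=n$ for every colour, averaging over the $c$ colours and over the vertices not yet in $S$ yields the stated bounds; the ratio $\tfrac{c}{r-c-1}$ in (iii), and the sharper $\tfrac{c-1}{c-2}$ in (iv) coming from the extra room available precisely when $r=2c-1$, is just the balance between the number of clique vertices spent and the $r-2$ reservoir vertices each representing hyperedge consumes.

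The main obstacle in (ii)--(iv) is colour control. A single $r$-set has only $r<2c$ slots, so — unlike the case $r=2c$ of Theorem~\ref{r = 2c}, where one $r$-set can be made to ``avoid'' all $c$ colours simultaneously — for $r<2c$ one is forced to play off many hyperedges against each other through a careful deficiency/averaging argument. This is the step that absorbs most of the work, and is where the present bounds fall short of the conjectured value $1+c\lfloor\frac{n-2}{c-1}\rfloor$ suggested by part (i).
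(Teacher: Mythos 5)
Your part (i) is correct and is essentially the paper's own construction and argument (partition into $c$ parts of size $\lfloor\frac{n-2}{c-1}\rfloor$, colour by the least part met in at most one vertex). The problems are in (ii)--(iv). For (ii) you defer the whole proof to the inequality $\sum_{i=1}^c \ex_r\bigl(N,B^rK_{n_i}\bigr)<\binom{N}{r}$ at $N=\sum_i n_i$, but this is neither proved nor plausibly provable by the ``superadditivity of $x^{r-1}$'' you cite: in this regime Berge-clique Tur\'an numbers are of order $N^r$, not $N^{r-1}$. Already for $c=2$, $r=3$, $n_1=n_2=n$, the family of all triples meeting a fixed $(n-2)$-set in at least two vertices is Berge-$K_n$-free and has $\binom{n-2}{3}+\binom{n-2}{2}(n+2)=\frac{2}{3}n^3-O(n^2)$ edges, so $2\,\ex_3(2n,B^3K_n)\ge\binom{2n}{3}-O(n^2)$: your inequality, if true at all, holds with only lower-order slack, and establishing it would require knowing these Tur\'an numbers to additive precision $O(N^{r-1})$, which nothing in your sketch supplies (and in general a Ramsey bound at such a tight threshold need not follow from Tur\'an numbers at all, since two near-extremal families need not fit together into a partition). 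The paper proves (ii) by a different route you would need to reconstruct: induction on $\sum n_i$, setting aside a vertex $u$, obtaining a Berge-$K_{n_i-1}$ in each colour, and, when no colour extends through $u$, taking Hall deficiency sets $B_i$ and counting the hyperedges of the link $\cL_u$ meeting every $B_i$ (at least $1+\sum_i|B_i|-c$ of them, via Lemma \ref{eni}), contradicting the deficiencies.

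For (iii) and (iv) you give no argument at all: ``grow $S$ and average'' is a plan, not a proof --- the invariant, the averaging computation, and in particular how the constants $\frac{c}{r-c-1}$ and $\frac{c-1}{c-2}$ emerge are missing, and the heuristic of ``$r-2$ reservoir vertices per representing hyperedge'' does not yield them. The paper's argument hinges on a concrete pigeonhole step for which your sketch has no substitute: letting $E_i$ be the pairs lying in fewer than $\binom{r}{2}$ colour-$i$ hyperedges and $V_i$ the vertices they cover, no vertex lies in more than $2c-r+1$ of the sets $V_i$ (Claim \ref{upperdegree}: otherwise one chooses $e_i\in E_i$ for all $i$ spanning at most $r-1$ vertices, and the at least $N-r+1$ hyperedges containing all of them cannot be coloured), so some $V_i$ has at most $\frac{(2c-r+1)N}{c}$ vertices; then $V\setminus V_i$ has at least $n$ vertices, every pair inside it lies in at least $\binom{r}{2}$ colour-$i$ hyperedges, and Lemma \ref{philip} produces the monochromatic Berge-$K_n$; part (iv) refines this by showing at most one pair of the $V_i$'s can intersect when $r=2c-1$. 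Note also that your ``device'' requiring representing hyperedges to meet $S$ in exactly the pair is stronger than necessary (Lemma \ref{philip} needs only each pair in $\binom{r}{2}$ hyperedges of the colour), and you never show such exact-intersection representatives exist; as it stands, (ii)--(iv) are unproved.
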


So far all our upper bounds are linear in $n$.
In \cite{STZZ} the authors showed $R^3(B^3K_n)=\Omega(n^2/\log n)$ (note that the statement in \cite{STZZ} is actually weaker, but this is what follows from their proof). It is still possible that $R^c(B_rK_n)$ is polynomial in $n$ if $r\ge 3$. Here we show that even if it holds, the degree and coefficients of such a polynomial must depend on $c$.

\begin{proposition}\label{poly} $R^c(B^3K_n)= \Omega\left(\frac{n^{\lceil\frac{c}{2}\rceil}}{(\log n)^{\lceil c/2\rceil-1}}\right)$.

\end{proposition}

In case we have only two colors, the above theorems settle almost everything if the uniformity is at least 4. For 3-uniform hypergraphs, we prove the following.

\begin{proposition} 
\label{3-uniform} If $n\ge m>1$ and $n+m\ge 7$, then $n+m-3\le R(B^3K_n,B^3K_m)\le n+m-2$.

\end{proposition}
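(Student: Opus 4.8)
The plan splits along the two inequalities.

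\textbf{Lower bound $R(B^3K_n,B^3K_m)\ge n+m-3$.} I will colour $\cK_{n+m-4}^3$ (nonempty since $n+m\ge 7$) with no blue Berge-$K_n$ and no red Berge-$K_m$. Partition the vertices into $A$ with $|A|=n-2$ and $B$ with $|B|=m-2$, and colour a triple blue if at least two of its vertices lie in $A$, and red otherwise (so a triple is red iff at least two of its vertices lie in $B$). A blue Berge-$K_n$ amounts to an $n$-set $S$ together with an injection from $\binom{S}{2}$ to the blue triples sending each pair to a triple containing it; but a pair lying inside $B$ is in no blue triple, so such an $S$ meets $B$ in at most one vertex, forcing $|S\cap A|\ge n-1>|A|$ — impossible. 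The red case is identical with $A$ and $B$ swapped.

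\textbf{Upper bound $R(B^3K_n,B^3K_m)\le n+m-2$} (improving, for $r=3$, the bound $n+m$ of Theorem \ref{r < 2c}(ii) by $2$). Take a $2$-colouring of $\cK_N^3$ with $N=n+m-2$; I must find a blue Berge-$K_n$ or a red Berge-$K_m$. The case $m=2$ is trivial (no red Berge-$K_2$ means every triple is blue, and $\cK_n^3$ contains a Berge-$K_n$ for $n$ large), so assume $m\ge 3$, whence $N>n$ and $N>m$. For a pair $p$ put $B(p)=\{z:p\cup\{z\}\text{ blue}\}$ and $R(p)=\{z:p\cup\{z\}\text{ red}\}$, so $|B(p)|+|R(p)|=N-2$. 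The basic tool is: \emph{if $W$ is an $\ell$-set with $\ell<N$ and $B(p)\not\subseteq W$ for every pair $p\subseteq W$, then there is a blue Berge-$K_\ell$ on $W$} — choosing $z_p\in B(p)\setminus W$ for each pair $p\subseteq W$, the triples $p\cup\{z_p\}$ are blue and (since $(p\cup\{z_p\})\cap W=p$) pairwise distinct, so they form the required system of distinct representatives; symmetrically for red. Hence, barring a blue Berge-$K_n$ and a red Berge-$K_m$, we obtain: (i) every $n$-set $W$ contains a pair $p$ with $B(p)\subseteq W$; and (ii) every $m$-set $W'$ contains a pair $q$ with $R(q)\subseteq W'$.

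The crux is showing (i) and (ii) cannot both hold when $N=n+m-2$. The starting point is the structure of the all-blue and all-red pairs $B^*=\{p:R(p)=\emptyset\}$ and $R^*=\{p:B(p)=\emptyset\}$: no vertex is incident to both a $B^*$-edge and an $R^*$-edge (the triple through both would have to be blue and red at once), and the endpoints of the $B^*$-edges form a clique in the blue shadow $\{p:B(p)\ne\emptyset\}$ (if $\{x,x'\}\in B^*$ then $\{x,x',y\}$ is blue for all $y$), dually for $R^*$. I would then run a case analysis on whether some pair has an extreme monochromatic-extension count. If some pair $p$ has $|B(p)|\ge n-1$ (or, symmetrically, $|R(p)|\ge m-1$), the abundance of monochromatic triples through $p$ is leveraged — via (i) applied to $n$-sets adapted to $p$, together with the structural facts — to build the corresponding monochromatic Berge copy. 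Otherwise every pair has $|B(p)|\le n-2$ and $|R(p)|\le m-2$, hence, since these sum to $n+m-4$, $|B(p)|=n-2$ and $|R(p)|=m-2$ for \emph{every} pair $p$; in this rigid regime one picks a suitable $n$-set $W$ and verifies Hall's condition for the bipartite incidence graph between $\binom{W}{2}$ and the blue triples directly, producing a blue Berge-$K_n$ and the final contradiction.

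The rigid regime is where I expect the real difficulty: the sufficient condition above is not necessary, so one cannot simply assign each pair a private outside vertex, and the SDR has to be read off from Hall's condition, whose verification uses the abundance of both colours of triple and a careful analysis of the worst test sets (pairs lying inside a common small set). Moreover the boundary cases forced by $n\ge m\ge 3$ and $n+m\ge 7$ — in practice $n=4$, so $(n,m)\in\{(4,3),(4,4)\}$ — must be dealt with by hand, since the Hall count is tight there. Organising the case split so that precisely these rigid colourings go to the Hall argument while everything else is killed by (i)–(ii) is the delicate organisational point of the write-up.
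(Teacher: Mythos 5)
Your lower bound is fine and is exactly the paper's construction (an $(n-2)$-set and an $(m-2)$-set, a triple coloured by the side it meets in two vertices), so there is nothing to add there. The genuine gap is in the upper bound. After the correct reduction to ``(i) and (ii) cannot both hold on $N=n+m-2$ vertices'', you never prove that incompatibility: the case ``some pair $p$ has $|B(p)|\ge n-1$'' is only described as being ``leveraged \dots to build the corresponding monochromatic Berge copy'' (a single heavy pair does not by itself yield a Berge-$K_n$, so this needs real work, none of which is given), and the rigid regime $|B(p)|=n-2$, $|R(p)|=m-2$ for every pair is exactly where you yourself locate the difficulty and where nothing is verified: it is not specified which $n$-set $W$ to take, and your ``basic tool'' is unavailable there, since a pair $p\subseteq W$ may have all of its $n-2$ blue extensions inside $W$ (indeed $|W\setminus p|=n-2$), so Hall's condition must be checked by some other counting that you do not supply. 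The boundary cases $(n,m)\in\{(4,3),(4,4)\}$ are likewise deferred ``by hand'' and absent. As written, the heart of the upper bound is a plan, not a proof.

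For comparison, the paper sidesteps this global analysis entirely by induction on $n+m$: on $n+m-2$ vertices set aside a vertex $u$; by induction (and the indirect assumption) the remaining $n+m-3$ vertices carry both a blue Berge-$K_{n-1}$ on a set $A$ and a red Berge-$K_{m-1}$ on a set $B$; the $2$-coloured link graph of $u$ contains a monochromatic spanning tree, say blue, and Lemma \ref{feszfa}, applied with the root chosen outside $A$, assigns to each $w\in A$ a distinct blue hyperedge containing both $u$ and $w$; these are disjoint from the hyperedges of the blue Berge-$K_{n-1}$ (which avoid $u$), giving a blue Berge-$K_n$. Only small base cases ($m=2$, and $(n,m)=(4,3)$ by a short case analysis) remain, which is also the source of the gap of one between the two bounds in the statement. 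If you want to keep your direct, non-inductive route you must actually establish that (i) and (ii) are incompatible, including the rigid regime; otherwise the inductive argument above is the substantially shorter and complete path.
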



Let us note that we use induction to prove the upper bound. The difference of one between the lower and upper bounds comes from the fact that the induction starts at small values $n$ and $m$ where $R(B^3K_n,B^3K_m)= n+m-2$ holds. One could easily eliminate this gap by dealing with additional small cases, potentially using a computer program. We have recently learned that the authors in \cite{STZZ} did exactly this, showing $R(B^3K_n,B^3K_m)= n+m-3$ if $n\ge 5$ and $m\ge 4$.

\vspace{2mm}

We also determine the exact value of the Ramsey number for every pair of Berge trees.

\begin{thm}\label{trees} Let $T_1$ and $T_2$ be trees with $n=|V(T_1)|\ge |V(T_2)|$. 

\[ R(B^3T_1,B^3T_2)= \begin{cases} 
     n+1 & \textup{if } |V(T_1)|= |V(T_2)|\le 4 \textup{ or } T_1=T_2=S_5 \\[1em]
      n & \textup{otherwise.}
   \end{cases}
\]
\end{thm}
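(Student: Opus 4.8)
The plan is to prove Theorem~\ref{trees} by combining a lower bound construction with an upper bound argument that leverages the key structural fact about Berge copies of trees: an $r$-uniform hypergraph $\cH$ contains a Berge-$T$ for a tree $T$ with $k$ edges as soon as $\cH$ has a ``large enough'' connected-like piece, since one can greedily embed $T$ edge by edge, using each hyperedge to cover one tree-edge while keeping the remaining two vertices of the triple available. More precisely, for the upper bound I would first record a lemma: if a $3$-uniform hypergraph $\cH$ on $N$ vertices has the property that in some linear order its hyperedges can be used to build any tree on $\le N$ (or $\le N-1$) vertices, then a monochromatic Berge-$T$ exists; the real content is to show that in any red/blue coloring of $\cK_N^3$ with $N=n$ (the generic case) one of the color classes is ``Berge-tree-universal'' for trees up to the required size.

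\textbf{Lower bound.} For the exceptional cases I would exhibit a $2$-coloring of $\cK_n^3$ with no blue Berge-$T_1$ and no red Berge-$T_2$. When $|V(T_1)|=|V(T_2)|=k\le 4$, note a Berge-$T$ for a tree on $k$ vertices needs $k-1$ hyperedges and hence at least $k$ vertices is not automatic — but when $k\le 4$ one can color $\cK_n^3$ so that, say, blue hyperedges all avoid some fixed vertex structure forcing blue components to be too small to host $T_1$; symmetrically for red. The point is that with few edges ($\le 3$) the Berge copy imposes genuine obstructions that a clever coloring of $\cK_{k}^3$ defeats, giving $R\ge k+1$. The case $T_1=T_2=S_5$ (the star on $5$ vertices, $4$ edges) is special because a Berge-$S_5$ needs a vertex of ``hyperdegree'' $4$ within one color; I would color $\cK_5^3$ so that every vertex has at most $3$ hyperedges of each color in its link, which is an easy counting/design argument on the $\binom{5}{3}=10$ triples, yielding $R(B^3S_5,B^3S_5)\ge 6$.

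\textbf{Upper bound.} For the generic case I must show $R(B^3T_1,B^3T_2)\le n$, i.e. every $2$-coloring of $\cK_n^3$ contains a blue Berge-$T_1$ or a red Berge-$T_2$; and for the exceptional cases the matching $\le n+1$. The strategy: given a coloring of $\cK_n^3$, consider the ``majority color'' graph-shadow, or better, apply Proposition~\ref{3-uniform}-type reasoning — since any tree on $n$ vertices embeds in a Berge sense into $B^3K_{n'}$ for modest $n'$, and $R(B^3K_a,B^3K_b)\le a+b-2$, I would pick $a,b$ with $a+b-2\le n$ and $a\ge|V(T_1)|$-ish, $b\ge|V(T_2)|$-ish, then upgrade a monochromatic Berge-$K_a$ to a Berge-$T_1$ (any tree on $a$ vertices is ``Berge-contained'' in a Berge-$K_a$, since $B^3K_a$ has enough hyperedges and flexibility). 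The arithmetic $|V(T_1)|+|V(T_2)|-2\le n$ fails when the trees are large and roughly equal, so this crude bound only handles $|V(T_2)|$ small; for the remaining range I would argue directly by a greedy/alternating embedding, tracking a growing monochromatic Berge-forest and showing that if it cannot be extended in blue then the red link structure is forced to be rich enough to complete $T_2$ — this dichotomy is the heart of the matter.

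\textbf{Main obstacle.} The hard part will be the tight upper bound $\le n$ when both trees have close to $n$ vertices: here there is essentially no slack (only $n$ vertices to host a structure needing $n-1$ or $|V(T_2)|-1$ hyperedges), so the argument cannot afford to ``waste'' any vertex, and one must carefully use the bijection requirement in the Berge definition — distinct hyperedges for distinct tree-edges — together with the abundance of triples ($\binom{n}{3}$ of them, vastly more than $n-1$) to always find an unused triple of the right color through the frontier of the partial embedding. Handling the precise threshold where the answer jumps from $n$ to $n+1$ (the $k\le 4$ and $S_5$ cases) will require a careful finite check, which I would do by hand for $\cK_4^3$ and $\cK_5^3$, confirming both the constructions and that adding one vertex destroys them.
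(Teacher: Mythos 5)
There is a genuine gap: your upper bound for the generic case, which is the entire content of the theorem, is a plan rather than an argument. The reduction through Proposition~\ref{3-uniform} only yields $R(B^3T_1,B^3T_2)\le n+|V(T_2)|-2$, which is at most $n$ only when $|V(T_2)|=2$, so it buys essentially nothing; for everything else you defer to an unspecified ``greedy/alternating embedding'' whose dichotomy you yourself call the heart of the matter. That dichotomy is exactly what must be proved, and it does not come for free: a greedy extension of a partial monochromatic Berge-forest can fail because the triple you need is already assigned to another tree edge (the bijection requirement), and with only $n$ vertices there is no slack to restart. The paper's proof of the tight bound (Theorem~\ref{nagyfa}, for $n\ge 6$) rests on three ingredients that are absent from your proposal: (1) an induction on $n$ in which one deletes not an arbitrary vertex but a leaf, or a non-leaf vertex together with its leaf neighbours, chosen via a structural lemma about trees (Lemma~\ref{ujfa}) so that the smaller trees remain in the inductive class; (2) the notion of \emph{deep blue} and \emph{deep red} pairs (pairs $u,v$ such that every triple containing both has the same colour), which is what lets one extend a Berge copy found on $n-1$ vertices back by the deleted vertex using hyperedges guaranteed not to collide with those already used; and (3) Hall-type arguments (Lemmas~\ref{philip} and~\ref{philip2}) to convert a copy of the tree in the shadow graph, each of whose edges lies in at least two hyperedges of one colour, into a genuine Berge copy, together with a delicate case analysis (stars, $P_6$, trees satisfying the various alternatives of Lemma~\ref{ujfa}). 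Without something playing these roles, ``always find an unused triple of the right colour through the frontier'' is precisely the statement to be established, not a method for establishing it.

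Your lower bounds are fine in spirit but also underdeveloped: for $|V(T_1)|=|V(T_2)|\le 4$ the obstruction is pure hyperedge counting (a Berge tree on $k$ vertices needs $k-1$ hyperedges of one colour, and $\cK_k^3$ has too few triples for both colours to be blocked only by a cleverer construction --- e.g.\ splitting the four triples of $\cK_4^3$ two--two suffices), rather than the vague ``blue hyperedges avoid some fixed vertex structure''; and your $S_5$ idea (colour $\cK_5^3$ so every vertex sees at most three triples of each colour) is exactly the paper's cyclic construction. These finite checks, however, are the easy part and do not compensate for the missing inductive machinery that carries the upper bound $R(B^3T_1,B^3T_2)\le n$ for all trees on at least six vertices, nor for the exact values when $|V(T_2)|<|V(T_1)|\le 5$, which in the paper require their own small case analysis (Lemma~\ref{kisfa}).
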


In case of $c=2$ and $r=3$ from Proposition \ref{3-uniform} we know that the Berge Ramsey number is between roughly $n$ and $2n$ for a graph $G$ on $n$ vertices. A parameter that might play an important role is the \textit{vertex cover number} of $G$, which is the smallest number of vertices such that every edge of $G$ is incident to at least one of them. Here we show that the Berge Ramsey number is close to $n$ for graphs with very small vertex cover number, but close to $2n$ for graphs with very large vertex cover number.

\begin{proposition}\label{vercov} If $G$ is a graph on $n$ vertices with vertex cover number $k\ge 3$, then $$2k-1\le R(B^3G,B^3G)\le n-k+2R(K_k,K_k).$$

\end{proposition}

The structure of the paper is as follows. In Section \ref{prel} we collect several lemmas that we will use later. In Section 3 we prove results in case $r\ge 2c$ (i.e., Proposition \ref{r>2c}, Theorem \ref{r = 2c}).
In Section 4 we prove results in the case $r<2c$ (Theorem \ref{r < 2c}, Proposition \ref{poly}, Proposition \ref{3-uniform}, Theorem \ref{trees} and Proposition \ref{vercov}). 

\section{Preliminaries}\label{prel}

The \textit{shadow graph} of a hypergraph $\cH$ is the graph consisting of all the 2-edges that are subedges of a hyperedge of $\cH$.
We will often use the following auxiliary bipartite graph. Given a set $E_0$ of edges of the shadow graph, let $\Gamma(E_0)$ be the bipartite graph with part $A$ consisting of the edges in $E_0$, and part $B$ consisting of the hyperedges of $\cH$ containing the edges in part $A$, where an element of $B$ is connected to an element of $A$ if the corresponding hyperedge contains the edge. Let $\Gamma_i(E_0)$ denote the subgraph of $\Gamma(E_0)$ obtained if we delete from $B$ the hyperedges of color different from $i$.

Let $G$ be the shadow graph of a Berge-$F$-free $r$-graph $\cH$. The graph $G$ might contain a copy of $F$. Let $E_0$ be the set of edges in a copy of $F$ and consider $\Gamma(E_0)$. Observe that a matching covering $A$ in $\Gamma(E_0)$ would give a Berge copy of $F$ in $\cH$ by the definition of a Berge copy; a contradiction, thus there is no such matching.


Hall's marriage theorem states that a bipartite graph $G[A,B]$ does not contain a matching covering $A$ if and only if there is a subset $A'\subset A$ with $|N(A')|<|A'|$. We will use its following simple corollary several times, hence we state it as a lemma.

\begin{lemma}\label{philip} Let $\cH$ be an $r$-graph, and let $G$ be the graph formed by the 2-edges that are contained in at least $\binom{r}{2}$ hyperedges of $\cH$. Then if $G$ contains a copy of $F$, then $\cH$ contains a copy of Berge-$F$.

\end{lemma}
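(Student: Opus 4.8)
The plan is to apply the Hall-type argument outlined just before the lemma statement, but being careful about how many hyperedges each edge of $G$ is guaranteed to sit inside. Suppose for contradiction that $G$ contains a copy of $F$, and let $A = E(F) \subseteq E(G)$ be its edge set, say with $|A| = m$. I would form the auxiliary bipartite graph $\Gamma$ with parts $A$ and $B$, where $B$ is the set of all hyperedges of $\cH$ that contain at least one edge from $A$, and an edge of $\Gamma$ joins $a \in A$ to $h \in B$ whenever $a \subset h$. As noted in the paragraph preceding the lemma, a matching of $\Gamma$ saturating $A$ would yield a Berge-$F$ inside $\cH$, contradicting Berge-$F$-freeness; so it suffices to verify Hall's condition for $A$.

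The key step is the following counting observation. Take any nonempty $A' \subseteq A$. By the definition of $G$, each edge $a \in A'$ is contained in at least $\binom{r}{2}$ hyperedges of $\cH$, all of which lie in $N_\Gamma(\{a\}) \subseteq N_\Gamma(A')$. On the other hand, a single hyperedge $h \in B$ has only $\binom{r}{2}$ two-element subsets, so $h$ can be adjacent in $\Gamma$ to at most $\binom{r}{2}$ edges of $A'$. Therefore, double-counting the edges of $\Gamma$ between $A'$ and $N_\Gamma(A')$ gives
\[
\binom{r}{2}\,|A'| \;\le\; \#\{\text{edges of }\Gamma\text{ between }A'\text{ and }N_\Gamma(A')\} \;\le\; \binom{r}{2}\,|N_\Gamma(A')|,
\]
hence $|N_\Gamma(A')| \ge |A'|$. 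Thus Hall's condition holds, a matching saturating $A$ exists, and we obtain the desired contradiction. So $G$ is $F$-free.

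I do not expect a serious obstacle here; the only point requiring slight care is the direction of the two inequalities in the double count — one wants the lower bound $\binom{r}{2}|A'|$ on the number of $\Gamma$-edges incident to $A'$ to come from the defining property of $G$ (each such edge lies in many hyperedges), and the upper bound $\binom{r}{2}|N_\Gamma(A')|$ to come from the trivial fact that each hyperedge has only $\binom{r}{2}$ pairs. Once these are lined up correctly, Hall's theorem finishes the argument immediately.
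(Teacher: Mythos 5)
Your argument is correct and is essentially the paper's proof: both use the same auxiliary bipartite graph and the same double count (at least $\binom{r}{2}|A'|$ edges incident to $A'$ by the definition of $G$, at most $\binom{r}{2}|N(A')|$ since a hyperedge has only $\binom{r}{2}$ pairs) to get Hall's condition and hence a Berge-$F$, a contradiction. The only cosmetic difference is that you verify Hall's condition directly, while the paper assumes a blocking set $A'$ with $|N(A')|<|A'|$ and derives the contradiction from the same two inequalities.
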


\begin{proof} Assume that $G$ contains a copy of $F$ and let $E_0$ be the set of its edges. A matching covering $A$ in $\Gamma(E_0)$ would give a copy of Berge-$F$ in $\cH$, as desired. So let us show that there is a matching covering $A$.

Suppose for a contradiction that there is no matching covering $A$. Then there is a set $A'\subset A$ with $|N(A')|<|A'|$ by Hall's theorem. The number of edges between $A'$ and $N(A')$ is at least $\binom{r}{2}|A'|$ by the construction of $G$, and at most  $\binom{r}{2}|N(A')|$, as an $r$-edge contains at most $\binom{r}{2}$ 2-edges. This contradicts $|N(A')|<|A'|$, finishing the proof.
\end{proof}

We will also use the following simple generalization.

\begin{lemma}\label{philip2} Assume that $F$ has the property that any set of $r$ vertices spans at most $t$ edges of $F$. Let $\cH$ be an $r$-graph, and let $G$ be the graph formed by the 2-edges that are contained in at least $t$ hyperedges of $\cH$. Then if $G$ contains a copy of $F$, then $\cH$ contains a copy of Berge-$F$.

\end{lemma}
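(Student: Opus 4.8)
The argument is a straightforward adaptation of the proof of Lemma~\ref{philip}, replacing the uniform bound $\binom{r}{2}$ by the hypothesis-dependent bound $t$. The plan is to assume for contradiction that $G$ contains a copy of $F$, and to build the same auxiliary bipartite graph: let $A$ be the set of edges of such a copy of $F$ (viewed inside $G$), and let $B$ be the set of those hyperedges of $\cH$ that contain at least one edge of $A$, with $b\in B$ joined to $a\in A$ whenever the hyperedge $b$ contains the $2$-edge $a$. As in the earlier lemma, a matching of this bipartite graph saturating $A$ would yield an injective map $E(F)\to E(\cH)$ with each graph edge contained in its image hyperedge, hence a Berge copy of $F$ in $\cH$, contradicting Berge-$F$-freeness.

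So it remains to rule out the failure of Hall's condition. Suppose there is a set $A'\subseteq A$ with $|N(A')|<|A'|$. Counting edges of the bipartite graph between $A'$ and $N(A')$: on one hand, every $a\in A'$ lies in at least $t$ hyperedges of $\cH$ by the definition of $G$, and all such hyperedges lie in $N(A')$, so the number of such bipartite edges is at least $t\,|A'|$. On the other hand, each hyperedge $b\in N(A')$ is an $r$-set; the edges of $A'$ it is joined to are edges of the copy of $F$ whose both endpoints lie in $b$, i.e.\ edges of $F$ spanned by the $r$ vertices of $b$. By the hypothesis on $F$, there are at most $t$ of them, so the count is at most $t\,|N(A')|$. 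Combining, $t\,|A'|\le t\,|N(A')|$, i.e.\ $|A'|\le|N(A')|$, contradicting $|N(A')|<|A'|$. Hence Hall's condition holds, the saturating matching exists, and we reach the desired contradiction; therefore $G$ is $F$-free.

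The only point requiring any care — and the place where the generalization genuinely uses its hypothesis — is the upper bound on the degree of a vertex $b\in N(A')$ in the auxiliary graph: one must observe that the edges of $A'$ adjacent to $b$ correspond precisely to edges of the fixed copy of $F$ both of whose endpoints lie in the $r$-element hyperedge $b$, so their number is bounded by the maximum number of edges of $F$ spanned by $r$ of its vertices, which is $t$ by assumption. (In Lemma~\ref{philip} this role was played by the trivial bound that an $r$-set spans at most $\binom{r}{2}$ pairs.) No further obstacle arises; the proof is essentially a two-line double count once the auxiliary graph is set up.
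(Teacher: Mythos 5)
Your proof is correct and follows the paper's argument exactly: the same auxiliary bipartite graph, Hall's condition, and the double count giving $t|A'|\le t|N(A')|$, with the degree bound on a hyperedge coming from the hypothesis that $r$ vertices span at most $t$ edges of $F$. In fact you spell out that degree bound slightly more carefully than the paper does, but the substance is identical.
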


\begin{proof} The proof goes the same way as the proof of Lemma \ref{philip}. We obtain $A'$ similarly. However, this time the number of edges between $A'$ and $N(A')$ is at least $t|A'|$ by the construction of $G$, and at most  $t|N(A')|$, as an $r$-edge contains at most $t$ 2-edges. This again contradicts $|N(A')|<|A'|$, finishing the proof.
\end{proof}

\begin{lemma}\label{feszfa} Let $T$ be a tree on $n$ vertices and $v\in V(T)$. Then there is a bijection $f:V(T)\setminus \{v\} \rightarrow E(T)$ such that $f(u)$ contains $u$ for every $u$.
\end{lemma}

\begin{proof} We prove it by induction on $n$, the base case $n=1$ is trivial. We obtain a forest $T'$ by deleting $v$ from $T$, let $T_1,\dots,T_k$ be its components, then each $T_i$ contains a neighbor $v_i$ of $v$. We apply the inductive hypothesis to each $T_i$ to find a bijection $f_i : V(T_i)\setminus \{v_i\} \rightarrow E(T_i)$. Let $f(x) = f_i(x)$ if $x \in V(T_i)\setminus \{v_i\}$, and $f(x) = vx$ if $x = v_i$ for some $i$. It is easy to see that $f$ is a bijection because each $f_i$ is a bijection which maps to edges of $E(T_i)$, and then we assigned only edges that do not belong to any $T_i$.
\end{proof}

\begin{defin}
Given an $r$-graph $\cH$ with vertex set $V(\cH)$ and a vertex $u \in V(\cH)$, the \textit{link hypergraph} $\cL_u$ is the $(r-1)$-graph on vertex set $V(\cH)$ consisting of the hyperedges $\{H\setminus \{u\}: u\in H\in \cH\}$. In case $r=3$, we call $\cL_u$ \textit{link graph}. If $\cH$ has a coloring, then $\cL_u$ has an \textit{inherited coloring}: the hyperedge $H\setminus \{u\}$ has the color of $H$.

\end{defin}

\begin{lemma}\label{ujfa} Let $T$ be a tree on $n\ge 6$ vertices. Then at least one of the following statements hold.

\textbf{(i)} There is a non-leaf vertex $v$ that is adjacent to exactly one non-leaf vertex $u$ such that deleting $v$ and its leaf neighbors we obtain a tree $T'$ that either has at least $6$ vertices or is a non-star on $5$ vertices.

\textbf{(ii)} There are two independent edges in $T$ such that any other edge is incident to at least one of their vertices. Moreover there are two adjacent edges with this property.

\textbf{(iii)} $T$ is a star or $P_6$.

\begin{proof} Let $T'$ be the tree we obtain if we remove all the leaves of $T$, and let $v$ be a leaf of $T'$. If $T$ is not a star, $T'$ has at least two vertices, in particular $v$ has a neighbor $u$ in $T'$. This shows $v$ is adjacent to exactly one non-leaf vertex in $T$. 

Observe that if $v$ has less than $n-5$ leaf neighbors in $T$, then deleting $v$ and its leaf neighbors from $T$, we obtain a tree $T''$ on at least five vertices. Then $T$ satisfies \textbf{(i)}, unless $T'' = S_5$, so $T$ is a double star. It is easy to see that double stars satisfy \textbf{(ii)}.

If $u$ is also a leaf in $T'$, then $T$ is a double star, so it satisfies \textbf{(ii)} again.
Thus $u$ has another neighbor $w$ in $T'$, and $T'$ has a leaf $v'$ different from $v$ (note that $v'$ might be $w$). If $v$ or $v'$ has less than $n-5$ leaf neighbors in $T$, then we are done by the previous paragraph.

Thus $T$ has at least $2n-10$ leaves and three non-leaves, which implies $n\ge 2n-7$, i.e. $n\le 7$. Moreover, if $n=7$, we know $T'$ is a path on three vertices $v$, $u$ and $w$, and both $v$ and $w$ have two leaf neighbors. It is easy to see that this tree satisfies \textbf{(ii)}. It is also easy to see that all trees on $6$ vertices but the star and $P_6$ satisfy \textbf{(ii)}.
\end{proof}

\end{lemma}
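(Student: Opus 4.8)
The plan is to run a direct structural analysis built around the subtree $T^\circ$ of $T$ induced on its non-leaf vertices, together with a single pruning move. If $T$ is a star we are in (iii), so I assume from now on that $T$ is not a star; then $T^\circ$ has at least two vertices and hence at least two leaves. For a leaf $v$ of $T^\circ$ let $u$ be its unique neighbour in $T^\circ$, let $d_v$ denote the number of leaf-neighbours of $v$ in $T$, and set $T_v = T\setminus(\{v\}\cup\{\text{leaf-neighbours of }v\})$. Because $u$ is the only non-leaf neighbour of $v$, the graph $T_v$ is again a tree, now on $n-1-d_v$ vertices, and $v$ is a non-leaf vertex adjacent to exactly the one non-leaf $u$; thus the pair $(v,u)$ is a candidate witness for (i).

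The key step is a pruning claim: if some leaf $v$ of $T^\circ$ satisfies $d_v\le n-6$, then (i) or (ii) holds. Here $T_v$ has at least $5$ vertices, so if it has at least $6$ vertices, or exactly $5$ and is not a star, then $(v,u)$ witnesses (i) outright. The only leftover possibility is $d_v=n-6$ and $T_v=S_5$, and in that case I would establish (ii) directly. Letting $z$ be the centre of this $S_5$, the surviving vertex $u$ is either $z$ or one of its four leaves. If $u=z$ then $u$ and $v$ are the only non-leaves of $T$, so $T$ is a double star; if $u$ is a leaf of the $S_5$ then $T$ is the caterpillar obtained from the path $z\,u\,v$ by attaching three leaves at $z$ and $n-6$ leaves at $v$. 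For a double star with centre edge $ab$ the adjacent pair $\{a\ell,ab\}$ and the independent pair $\{a\ell,b\ell'\}$ (with $\ell,\ell'$ pendant leaves) each dominate all edges; for the caterpillar the adjacent pair $\{zu,uv\}$ and the independent pair $\{zu,v\ell\}$ do the same. In either configuration (ii) holds.

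It remains to treat the case in which the pruning claim never applies, i.e. every leaf of $T^\circ$ has at least $n-5$ leaf-neighbours in $T$. If $T$ is a double star then (ii) holds by the pairs exhibited above, so I assume not; since $T$ is also not a star, $T^\circ$ has at least three vertices. The leaf-neighbour sets of distinct non-leaf vertices are disjoint subsets of the $n-|V(T^\circ)|$ leaves of $T$, so summing $d_v\ge n-5$ over any two leaves of $T^\circ$ gives $2(n-5)\le n-|V(T^\circ)|\le n-3$, forcing $n\le 7$. Thus, apart from the branches already resolved, only $n\in\{6,7\}$ survives. For $n=7$ the inequalities are tight and pin $T$ down to the single caterpillar with $T^\circ=P_3$ and exactly two leaves hung at each endpoint of $T^\circ$, which satisfies (ii); for $n=6$ a short enumeration of the trees with at least three non-leaves shows that each satisfies (ii) except $P_6$, which lies in (iii). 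This exhausts all cases.

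The main obstacle is the $T_v=S_5$ branch of the pruning claim. This is the one situation where deleting a leaf of $T^\circ$ together with its pendant leaves fails to expose a residual tree large enough to feed directly into (i), and one must instead recognise that $T$ is forced into one of two explicit small shapes and then verify the two-edge domination conditions of (ii) for each. Everything else is bookkeeping: the counting inequality and the finite check at $n\in\{6,7\}$ are routine, the only subtle point being that ruling out double stars (which already satisfy (ii)) is exactly what guarantees $|V(T^\circ)|\ge 3$ and thereby sharpens the vertex bound from $n\le 8$ to $n\le 7$.
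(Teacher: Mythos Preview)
Your argument follows essentially the same route as the paper: form the tree on non-leaf vertices, pick a leaf $v$ of it, prune $v$ together with its pendant leaves, and fall back on a short case check for $n\in\{6,7\}$ once the counting inequality $2(n-5)\le n-|V(T^\circ)|$ bites. The structure, the key inequality, and the endgame enumeration are all the same.

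One point worth noting: in the $T_v=S_5$ branch you are in fact more careful than the paper. The paper asserts that $T_v=S_5$ forces $T$ to be a double star, but as you observed this is only true when $u$ is the centre of that $S_5$; when $u$ is one of its leaves, $T$ is the caterpillar on the spine $z\,u\,v$ with three pendant leaves at $z$ and $n-6$ at $v$, which has three non-leaves. Your explicit verification of (ii) for this caterpillar (via the adjacent pair $\{zu,uv\}$ and the independent pair $\{zu,v\ell\}$) patches a small gap in the paper's write-up while staying within the same overall strategy.
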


\begin{lemma}\label{eni} For every set of positive integers $n_i$ ($1 \le i \le c$), we have $$\prod_{i=1}^cn_i>(\sum_{i=1}^c n_i)-c.$$

\end{lemma}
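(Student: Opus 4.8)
The plan is a straightforward induction on $c$. In fact it is cleanest to prove the slightly stronger claim that $\prod_{i=1}^c n_i \ge 1 + \sum_{i=1}^c (n_i - 1)$, from which the lemma follows at once, since $1 + \sum_{i=1}^c(n_i-1) = 1 + \bigl(\sum_{i=1}^c n_i\bigr) - c > \bigl(\sum_{i=1}^c n_i\bigr) - c$.

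For the base case $c = 1$ both sides of the stronger claim equal $n_1$. For the inductive step, write $P = \prod_{i=1}^{c-1} n_i$ and set $M := 1 + \sum_{i=1}^{c-1}(n_i - 1)$; by the inductive hypothesis $P \ge M$, and note $M \ge 1$ since each $n_i$ is a positive integer. Then
\[
\prod_{i=1}^c n_i = n_c P \ge n_c M = M + (n_c - 1) M \ge M + (n_c - 1) = 1 + \sum_{i=1}^c (n_i - 1),
\]
where the last inequality uses $(n_c - 1)(M - 1) \ge 0$, valid because $n_c \ge 1$ and $M \ge 1$. This closes the induction.

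There is no genuine obstacle here; the only point requiring a little care is to carry the auxiliary bound $M \ge 1$ through the induction, so that the estimate $(n_c-1)M \ge n_c-1$ is justified — this is exactly where one uses that the $n_i$ are positive integers (or, more generally, reals at least $1$) rather than arbitrary positive reals.
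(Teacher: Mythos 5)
Your proof is correct. Both you and the paper argue by induction on $c$, but the inductive steps are organized quite differently: the paper keeps the strict inequality as its induction hypothesis, singles out the smallest value $n_c$, and splits into the cases $n_c=1$ (where the product is unchanged and the right-hand side drops) and $n_c\ge 2$ (where a longer chain of estimates is needed, using $n_{c-1}\ge n_c\ge 2$ and that all remaining $n_i$ are at least $2$). You instead strengthen the statement to the sharp non-strict bound $\prod_{i=1}^c n_i \ge 1+\sum_{i=1}^c (n_i-1)$, which makes the inductive step a single uniform computation, $n_cM = M+(n_c-1)M \ge M+(n_c-1)$, with no case distinction and no need to order the $n_i$; the only bookkeeping is the observation $M\ge 1$, which you correctly carry through. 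Your strengthening buys a shorter and arguably more transparent argument, and it also records the exact extremal form (equality precisely when at most one $n_i$ exceeds $1$), whereas the paper's version proves only what it needs at the cost of the case analysis. Either argument fully suffices for the application in the paper.
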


\begin{proof} We prove the lemma by induction on $c$. For $c=1$ the statement is trivial. Let $n_c$ be the smallest of the integers $n_i$ ($1 \le i \le c$). If $n_c=1$, then we have $$\prod_{i=1}^cn_i=\prod_{i=1}^{c-1}n_i>(\sum_{i=1}^{c-1} n_i)-(c-1) =(\sum_{i=1}^c n_i)-c.$$ 

If $n_c\ge 2$, then $$\prod_{i=1}^cn_i\ge 2\prod_{i=1}^{c-1}n_i>2(\sum_{i=1}^{c-1} n_i)-2(c-1)
=  (\sum_{i=1}^{c-1} n_i) + (\sum_{i=1}^{c-2} n_i)+n_{c-1}-2(c-1)$$ $$\ge (\sum_{i=1}^{c} n_i) + (\sum_{i=1}^{c-2} n_i)-2(c-1)\ge
(\sum_{i=1}^{c} n_i) + 2(c-2)-2(c-1)\ge (\sum_{i=1}^c n_i)-c.$$
Note that in the above inequalities we use that $n_{c-1}\ge n_c$, $n_i\ge 2$ ($1 \le i \le c$) and induction.
\end{proof}

\section{Large uniformity}

\subsection{The case $r > 2c$. Proof of Proposition \ref{r>2c}}
Recall that Proposition \ref{r>2c} states that for $r>2c$ and $n>c+c\binom{r}{2}$ we have $R^c(B^rK_n)=n$.
Let us consider a $c$-colored $\cK_n^r$ with vertex set $V$ such that $|V|=n>c+c\binom{r}{2}$. Then we color an edge $uv$ ($u,v\in V$) with color $i$ if it is contained in at least $\binom{r}{2}$ hyperedges of color $i$ (thus an edge can get multiple colors). If there is a $K_n$ in the resulting graph $G$ such that all its edges are of color $i$, then that gives us a monochromatic Berge-$K_n$ of color $i$ by Lemma \ref{philip}.

Hence we can assume that for every color $i$, there is an edge $u_iv_i$ that is not of that color. Let us consider the set $U=\{u_1,\dots,u_c,v_1,\dots,v_c\}$. Obviously we have $|U|\le 2c < r$, thus $U$ is contained by at least $n-|U| \ge n - 2c >c\binom{r}{2}-c$ hyperedges. Thus there is a color $i$ shared by at least $\binom{r}{2}$ of those hyperedges, hence $u_iv_i$ is contained by at least $\binom{r}{2}$ hyperedges of color $i$, a contradiction.

\subsection{The case $r = 2c$. Proof of Theorem \ref{r = 2c}}


First we prove a lemma.

\begin{lemma}\label{r2c} Let $r=2c$ and $n\ge 12c\binom{r}{2}$ be given integers and consider a $c$-colored complete $r$-uniform hypergraph on a vertex set $V$ with $|V|=n$. Let us color an edge $uv$ (where $u,v\in V$) with color $i$ if $u$ and $v$ are contained together in at least $t=\binom{r}{2}+1$ hyperedges of color $i$. Note that this way an edge can get multiple colors. For every $i\le c$, let $E_i$ be the set of edges that do not have color $i$. If there are $j\neq l\le c$ with $|E_j|\ge 2$, $|E_l|\ge 2$, then we can find a monochromatic Berge-$K_n$.

\end{lemma}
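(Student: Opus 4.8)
The plan is to show that under the hypotheses of Lemma~\ref{r2c} the $c$-colored graph $G$ built on $V$ (where $uv$ gets color $i$ whenever $u,v$ lie together in at least $t=\binom{r}{2}$ hyperedges of color $i$) is so dense that some color class contains a spanning complete graph $K_n$, which then lifts to a Berge-$K_n$ by Lemma~\ref{philip}. First I would record the basic counting fact that drives everything: a pair $uv$ \emph{fails} to receive color $i$ only if it lies in at most $t-1$ hyperedges of color $i$. Any set $S\subseteq V$ with $|S|\le r$ is contained in $\binom{n-|S|}{r-|S|}$ hyperedges of $\cK_n^r$, an astronomically large number once $n$ is large; in particular, for $|S|\le r-2$, the pair inside (together with $S$) appears in vastly more than $c(t-1)$ hyperedges, so among the hyperedges through $S$ some color appears at least $t$ times. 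Since $r=2c$, a set of $2c$ vertices has room for $c$ disjoint pairs, and this is the combinatorial heart of the argument.

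The second step is to understand which pairs can be ``bad.'' Call $uv$ bad for color $i$ if it does not receive color $i$; by hypothesis, for each $i$ there are at least two such bad pairs. I would argue that the bad pairs for a fixed color $i$ must be ``concentrated'': if there were $c$ pairwise disjoint pairs $u_1v_1,\dots,u_cv_c$ such that $u_jv_j$ is bad for color $j$ (one disjoint bad pair per color), then taking $U=\{u_1,v_1,\dots,u_c,v_c\}$, a set of exactly $r=2c$ vertices, we would find $U$ itself contained in $\binom{n-r}{0}\cdot(\text{actually just } n\ge r$ forces at least one hyperedge$)$ --- more carefully, $U$ lies in at least one hyperedge, and we need more. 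The right move is: pick for each color $i$ a bad pair; if after discarding we can extract a partial system of disjoint bad pairs covering all $c$ colors within $\le r$ vertices, the $r$-set $U$ they span sits in hugely many hyperedges (at least $\binom{n-|U|}{r-|U|}$, and if $|U|=r$ still at least $1$; to get $t$ we instead take $U'\subsetneq U$ of size $r-2$ containing one chosen bad pair and count hyperedges through $U'$), yielding a color-$i$ count $\ge t$ for a supposedly bad pair and a contradiction. So the bad pairs for distinct colors cannot be spread out; in fact I expect one shows each color's bad pairs all share a common vertex, or all lie inside a bounded set, so the total vertex set touched by bad pairs has size $O(c)$, negligible compared to $n$.

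The third step converts this concentration into the conclusion. Let $W$ be the (bounded-size) set of vertices incident to any bad pair of any color. For $u,v\in V\setminus W$, the pair $uv$ receives \emph{every} color, in particular color $1$; moreover any pair with at most one endpoint... hmm, more cleanly: I would show that a single color $i_0$ has the property that $G_{i_0}$ (the graph of edges colored $i_0$) contains all pairs outside a small set, and then bootstrap. Since $\binom{V\setminus W}{2}$ is already a huge clique in, say, color $1$, and the remaining $O(c)$ vertices of $W$ can each be attached: for $w\in W$ and $u\in V\setminus W$, the pair $wu$ is bad for at most... well, it could be bad for several colors, but $\{w,u\}$ together with any $r-2$ further vertices from $V\setminus W$ lies in enormously many hyperedges, so $wu$ gets \emph{some} color, and by pigeonhole (over the many choices of extension, or simply because $wu$ lies in $\gg c(t-1)$ hyperedges total) $wu$ in fact gets a color with count $\ge t$. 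The point is that being bad for color $i$ is a global scarcity condition, and no pair is scarce in \emph{all} colors once $n$ is large; so choosing a color attained by $\ge t$ hyperedges for each of the finitely many ``exceptional'' pairs and then a common color for the bulk, a short argument (or a direct application of the fact that the only obstruction in Lemma~\ref{philip} is a Hall violation, which $\binom{r}{2}$-density rules out) gives a monochromatic $K_n$ in $G$, hence a Berge-$K_n$.

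The main obstacle I anticipate is the bookkeeping in the second step: pinning down exactly \emph{how} the bad pairs of the different colors must interact, i.e. proving the concentration statement with the correct quantitative bound on $|W|$ and correctly handling the boundary case $|U|=r$ where one only gets one hyperedge through $U$ rather than many (this is precisely where $r=2c$ is tight and where the ``$+1$'' in Theorem~\ref{r = 2c} will come from on the lower-bound side, and where the $n+1$ versus $n$ distinction between $K_n$ and non-complete $G$ in Theorem~\ref{coroll} originates). I expect the clean formulation is: if some color $i$ has two bad pairs that together span $\le 4$ vertices and these can be combined with bad pairs of the other $c-1$ colors to stay within $r-2$ vertices, contradiction; the genuine constraint is therefore that the bad pairs for each color all pass through one vertex, and those $c$ vertices plus a bit more form $W$ with $|W|\le 2c$ or so, comfortably less than $n$.
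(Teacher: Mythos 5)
There is a genuine gap, and it lies at the heart of the lemma. Your overall strategy is to show that one color class of the auxiliary graph $G$ contains a spanning $K_n$ and then invoke Lemma~\ref{philip}. But the hypothesis of Lemma~\ref{r2c} explicitly says that \emph{every} color misses at least two edges, so no color class of $G$ can contain $K_n$; the sentence ``no pair is scarce in all colors'' only gives that each pair receives \emph{some} color, and your plan of ``choosing a color attained by $\ge t$ hyperedges for each exceptional pair and then a common color for the bulk'' produces edges represented in different colors, not a monochromatic Berge-$K_n$. The entire difficulty of the lemma is precisely how to absorb, say, the edges of $E_1$ (those \emph{not} colored $1$) into a color-$1$ Berge-$K_n$ even though each of them lies in fewer than $t$ color-$1$ hyperedges. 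The paper does this by first showing (your correct step) that bad edges of distinct colors are pairwise disjoint, so that choosing one bad edge per color spans an $r$-set; the family $\cH$ of all such $r$-sets then forces, for some color (say $1$), every edge of $E_1$ to lie in at least one color-$1$ hyperedge of $\cH$ (otherwise the hyperedge assembled from one ``unrepresentable'' bad edge per color has no legal color). One then extends this injection, via a Hall-type matching over $E_1$ together with the ``nice'' edges, to a system of distinct color-$1$ hyperedges, deletes those hyperedges, checks that every remaining edge still lies in $t$ color-$1$ hyperedges, and applies Lemma~\ref{philip} to the rest. None of this mechanism appears in your proposal.

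Your anticipated ``concentration'' statement in step two is also not available: the disjointness argument only shows $V_1,\dots,V_c$ are pairwise disjoint; it does not bound $|V_i|$, and the sets $E_i$ may be large, which is exactly why the paper needs the matching argument with the threshold $p=n/2$ rather than a bounded exceptional set $W$ of size $O(c)$. Finally, your fallback for the boundary case $|U|=r$ (dropping one pair to get $U'$ of size $r-2$ and applying pigeonhole) does not yield a contradiction: the abundant color among the hyperedges through $U'$ may be the one whose bad pair was dropped, and indeed no contradiction should be reachable there, since the hypothesis of the lemma is satisfiable and the conclusion is an existence statement, not a refutation.
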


\begin{proof}

Let $V_i$ be the set of vertices incident to an edge in $E_i$. Let us pick an edge from $E_i$ for every $i\le c$. If these $c$ edges together cover at most $2c-1<r$ vertices, then they are contained in more than $n-r>c(t-1)$ hyperedges. Thus more than $t$ of these hyperedges have the same color $i$, which leads to a contradiction. Therefore, we can assume that the $V_i$s are pairwise disjoint and if we pick one edge from each $E_i$, then these edges span an $r$-set.  Let $\cH$ be the hypergraph consisting of all the $r$-sets that can be obtained this way. (Since $\cH$ is a subhypergraph of the complete $r$-uniform hypergraph, the hyperedges of $\cH$ are colored with the $c$ colors as well). Observe that for every $i\le c$, a hyperedge in $\cH$ contains exactly two vertices from $V_i$ and exactly one edge from $E_i$.



Let us assume that for every $i\le c$ there is an edge $u_iv_i\in E_i$ that is not contained in a hyperedge in $\cH$ of color $i$. Then the hyperedge containing all such edges $u_iv_i$ (with  $i\le c$) is in $\cH$ and no matter what its color is, we have a contradiction. Hence we can assume that every edge in, say, $E_1$ is contained in at least one hyperedge of color $1$ in $\cH$. For $uv\in E_1$ let $f_0(uv)$ be one of those hyperedges of color $1$ in $\cH$ that contain $u$ and $v$. Note that $f_0$ is an injection from $E_1$ to the hyperedges of color $1$ (in $\cH$).


Let us sketch briefly the next part of the proof. Our goal would be to extend $f_0$ to those edges that are contained in many hyperedges of color 1. However, even in that case it is possible that all those hyperedges are already images of $f_0$. Therefore, we also have to change what the edges in $E_1$ are mapped into.
We will build an injection $f$ from a larger set of the edges to hyperedges of color $1$ (in $\cH$) containing them. Note that there is no connection between $f_0$ and $f$.

Let $V'=\bigcup_{i=2}^c V_i$. If we pick an edge from every $E_i$ with $i>1$ then they span a set of $(r-2)$ vertices by above. Let $\cH'$ be the $(r-2)$-uniform hypergraph consisting of all such $(r-2)$-sets. Note that $\cH'$ has at least two hyperedges since at least one of the $E_i$'s with $i\neq 1$ contains at least two edges (this is where we use the assumption that at least two of the $E_i$s have size at least 2).

We call an edge $uv$ with $u\in V_1$, $v\in V'$ or $u,v\in V'$ \textit{nice} if it is contained in at least $p=n/2$ hyperedges in $\cH$ of color $1$. Consider $\Gamma_1(E_1)$. Obviously every vertex in $B$ has degree at most $t-1=\binom{r}{2}$. 

\begin{clm}\label{klem1} Let $A'\subset E_1$ be of size $x$. Then the neighborhood of $A'$ in $\Gamma_1(E_1)$ has size at least $2x-2t(c-1)$.

\end{clm}
\begin{proof} Consider two sets $H,H'$ from $\cH'$. There are $2x$ hyperedges of $\cH$ of the form $H \cup e$ or $H' \cup e$ for $e \in A'$, but these hyperedges do not necessarily have color $1$. For each $i>1$, let us fix an edge of $E_i$ contained in $H$ and similarly an edge of $E_i$ contained in $H'$. This shows that there are at most $2t$ hyperedges of color $i$ (for each $i>1$) containing either $H$ or $H'$ altogether. Thus, out of the $2x$ hyperedges of $\cH$ of the form $H \cup e$ or $H' \cup e$ for $e \in A'$, at least $2x-2t(c-1)$ have color $1$, and these hyperedges are obviously in the neighborhood of $A'$ in $G$.
\end{proof}

\begin{clm} There is a matching covering $A$ in $\Gamma_1(E_1)$.
\end{clm}

\begin{proof} Suppose indirectly that there is no such matching, then by Hall's theorem there is a set $S\subset A$ with $|N(S)|<|S|$. Let $S_1=S\cap E_1$ and $S_2=S\setminus S_1$, $x=|S_1|$ and $y=|S_2|$. Then the function $f_0$ shows $|N(S_1)|\ge |S_1|$, thus $S_2$ is non-empty. Let $e\in S_2$. By the definition of nice edges $e$ has at least $p$ neighbors. This implies $x+y=|S|>|N(S)|\ge |N(S_2)|\ge p$. 


By Claim \ref{klem1} we have $|N(S)|\ge |N(S_1)|\ge 2x-2t(c-1)$, which implies $y\ge x-2t(c-1)$ since $x+y=|S|>|N(S)|$. Now the number of edges between $S$ and $N(S)$ is at least $|S_1|+p|S_2|=x+py$, while on the other hand it is at most $t|N(S)|<t(x+y)$ (as every vertex in $N(S)$ has degree at most $t$). Rearranging $x+py<t(x+y)$, and using $t = \binom{r}{2}+1$ and $p=n/2$, we obtain that $$y<\frac{x\binom{r}{2}}{\frac{n}{2}-\binom{r}{2}-1}<\frac{x}{2}.$$ Thus we have $$x-2t(c-1)\le y<x/2,$$ i.e. $x<4t(c-1)$, which also implies $y<2t(c-1)$. Hence we have $6t(c-1)>x+y>p=n/2$, a contradiction with our assumption on $n$.
\end{proof}



Let $e$ be an element of $A$, i.e., a nice edge or an edge in $E_1$. Then we denote by $f(e)$ the  hyperedge that is matched to $e$ in $\Gamma_1(E_1)$. Let us delete the set of hyperedges $\{f(e): e\in A\}$ from the $c$-colored $\cK_n^r$. Let $\cF$ denote the remaining (colored) hypergraph.

\begin{clm}
Every edge not in $A$ is contained in at least $t$ hyperedges of color $1$ in  $\cF$.
\end{clm}

\begin{proof}
Let us consider an edge $uv\not \in A$. Then it is originally contained in at least $t$ hyperedges of color $1$ (in the $c$-colored $\cK_n^r$ before the set of hyperedges $\{f(e): e\in A\}$ were deleted). If $u$ is not in any $V_i$, or if $u,v\in V_i$ but $uv\not \in E_i$, then no hyperedge of $\cH$ contains $uv$, thus $\cF$ contains the same $t$ hyperedges of color $1$.
Otherwise we can pick an edge from every $E_i$ ($i>1$) such that the set $S$ of vertices spanned by these edges contains at least one of the vertices $u$, $v$. Then the set $S \cup \{u,v\}$ has size at most $2c-1$, thus it is contained in at least $n-2c+1$ hyperedges. At most $(c-1)(t-1)$ of these hyperedges have color different from $1$, thus at least $n-2c+1-(c-1)(t-1) \ge p+t$ of them have color $1$. As $uv$ is not nice, less than $p$ hyperedges containing $uv$ appear as $f(e)$ for some $e \in A$. Thus at least $t$ hyperedges of color $1$ containing $uv$ are in $\cF$, proving the claim.

\end{proof}

Now we can find a Berge copy of the graph consisting of all the edges not in $A$ in color $1$, applying Lemma \ref{philip}. Then we represent each edge $e\in A$ with $f(e)$ to obtain a Berge-$K_n$ in color $1$, finishing the proof.

\end{proof}

Using Lemma \ref{r2c}, we will now prove Theorem \ref{r = 2c}. We restate Theorem \ref{r = 2c} below for convenience.

Recall that a graph $G$ on at most $n$ vertices is \textit{good} if 
its vertex set can be partitioned into two parts $V_1$ and $V_2$ with $|V_1|=r-2$ such that for any set $U\subset V_2$ the sum of the degrees of the vertices in $U$ is at most $g(|U|) := |U|(n-r+1)-\binom{|U|}{2}$. Otherwise $G$ is not good.
\begin{thm*} Let $r=2c\ge 4$ and $n$ be large enough. Let $n=|V(G_1)|\ge |V(G_2)|\ge \dots \ge |V(G_c)|$. Then 
\[ R(B^rG_1,\dots,B^rG_c)= \begin{cases} 
     n+1 & \textup{if one of the $G_i$s is not good and the remaining $G_i$s are $K_n$}, \\[1em]
      n & \textup{otherwise.}
   \end{cases}
\]
\end{thm*}


\begin{proof} The lower bound $n$ is trivial. For the lower bound $n+1$ in the appropriate cases, let us consider the complete $r$-graph on $n$ vertices and let $u_1,v_1, \dots$,$u_{c-1},v_{c-1}$ be $2c-2$ distinct vertices. Let us color the hyperedges containing all of these vertices with color $c$. For every other hyperedge $H$ there is an $i\le c-1$ such that $H$ contains at most one of the vertices $u_i, v_i$. Then let the color of $H$ be the smallest such $i$. This way we colored all the hyperedges of the complete $r$-graph on $n$ vertices. For any $i<c$, the edge $u_iv_i$ is not contained in any hyperedge of color $i$, thus there is no monochromatic Berge-$K_n$ of color $i$. 

Let $V_1=\{u_1,v_1, \dots$,$u_{c-1},v_{c-1}\}$ and $V_2$ be the set of the remaining vertices. Consider an arbitrary $U\subset V_2$. Every vertex of $U$ is incident to $n-r+1$ hyperedges of color $c$. The number of hyperedges of color $c$ incident to vertices in $U$ is at most $|U|(n-r+1)-\binom{|U|}{2} =: g(U)$, as for every pair of vertices $u,v\in U$, we counted the hyperedge consisting of $u$, $v$ and $V_1$ twice. This shows that the hypergraph consisting of the hyperedges of color $c$ can only be a Berge copy of good graph, finishing the proof of the lower bound.

\vspace{2mm}

Below we prove the corresponding upper bounds. 

First let us consider the case when one of the $G_i$s is not good and the remaining $G_i$s are equal to $K_n$. Our goal is to show the upper bound $n+1$ in this case. Let us consider a $c$-colored complete $r$-uniform hypergraph on a vertex set of size $n+1$. We define $E_i$ as in Lemma \ref{r2c}. If there are two $E_i$s of size more than one, we find a monochromatic Berge-$K_{n+1}$ by Lemma \ref{r2c}. If there is an $i$ such that $|E_i|=1$, then by the definition of $E_i$, we know that all but one edge of $K_{n+1}$ are contained in at least $\binom{r}{2}+1$ hyperedges of color $i$. In other words, if $K_{n+1}^{-}$ denotes the graph obtained by removing exactly one edge from $K_{n+1}$, then Lemma \ref{philip} implies that we can find a copy of Berge-$K_{n+1}^{-}$ in color $i$, which of course, contains a copy of Berge-$K_n$. This proves the upper bound $n+1$ (in all the cases of the theorem).

We will now show that a (better) upper bound $n$ holds in the remaining cases of the theorem. Let us consider a $c$-colored complete $r$-uniform hypergraph on a vertex set $V$ of size $n$. We define $E_i$ as in Lemma \ref{r2c}. If there are two $E_i$s of size more than one, then we find a monochromatic Berge-$K_{n}$ by Lemma \ref{r2c}. Hence at least $c-1$ of the $E_i$s have size one. If $|E_i|=1$ for some $i$, then by the definition of $E_i$, all but one edge of $K_{n}$ are contained in at least $\binom{r}{2}+1$ hyperedges of color $i$. So Lemma \ref{philip} implies that we can find a copy of Berge-$K_n^-$ in color $i$ (where $K_n^-$ denotes the graph obtained by removing one edge from $K_n$).
Thus we are done unless there is at most one non-complete graph among the $G_i$s, say $G_c$, without loss of generality. Then $G_c$ must be a good graph, otherwise we are in the case that was already handled by the previous paragraph.

Let $u_iv_i$ be the only element of $E_i$ for each $i<c$. Assume first that there is a hyperedge $H$ of color $i$ containing $u_iv_i$. Then we find a Berge-$K_n$ of color $i$ as follows. First we map $u_iv_i$ to $H$. Then every other edge of $K_n$ is contained in at least $t-1=\binom{r}{2}$ hyperedges of color $i$ different from $H$, thus by Lemma \ref{philip} we can map those edges to hyperedges of color $i$ different from $H$. Therefore, we can assume that $u_iv_i$ is not contained in any hyperedges of color $i$.

Let $U_1=\{u_1,v_1, \dots$,$u_{c-1},v_{c-1}\}$. Let $U_2$ be the set of the remaining vertices and let $\cH$ be the hypergraph consisting of the hyperedges containing $U_1$. Then the hyperedges of $\cH$ are all of color $c$. We will show that $\cH$ contains a Berge copy of every good graph. 

Let $G$ be a good graph. We will embed its edges into distinct hyperedges of $\cH$. By definition, the vertices of $G$ are partitioned into $V_1$ and $V_2$; we consider an arbitrary bijection $\alpha$ that maps $V_1$ into $U_1$ and $V_2$ into $U_2$. We will build an embedding of $G$ in three steps. In the first step we embed the edges $uv$ of $G$ inside $V_2$, and we let $f(uv)=\{\alpha(u),\alpha(v)\}\cup U_1$. Let $\cH'$ be the subhypergraph of $\cH$ consisting of the hyperedges not of this form.

Then in the second step we embed the \textit{crossing} edges -- i.e., edges $uv$ such that $u\in V_1$, $v\in V_2$. Let $E'$ denote the set of edges in the shadow graph of $\cH'$ that have the form $\alpha(u)\alpha(v)$, where $uv$ is a crossing edge. We consider $\Gamma(E')$.
A matching covering $A$ here would mean we can extend the injection $f$ to the crossing edges. If there is no such matching, then by Hall's theorem there is a set $S$ of crossing edges $uv$ such that their images $\alpha(u)\alpha(v)$ are contained in less than $|S|$ hyperedges of $\cH'$ altogether. Let $V_0$ be the set of vertices in $V_2$ incident to at least one edge in $S$, and let $U_0=\alpha(V_0)$ (i.e., the image of $V_0$ under the map $\alpha$). Let $E_0$ be the set of the edges in $G$ incident to $V_0$, then $S\subseteq E_0$ and $|E_0|\le g(|U_0|)$. Let $\cH_0$ consist of hyperedges in $\cH$ incident to $U_0$, then $|\cH_0|=g(|U_0|)$. In the first step we mapped at most $g(|U_0|)-|S|$ edges of $E_0$ into hyperedges of $\cH$. Observe that no other edge is mapped into a hyperedge in $\cH_0$, thus there are at least $|S|$ hyperedges of $\cH_0$ in $\cH'$. We claim that each of these hyperedges contain an edge from $S$. Indeed, they each contain a vertex in $U_0$, thus an endpoint of an edge in $S$, and they each contain the other endpoint of that edge, since that is in $U_1$, which is contained in every hyperedge of $\cH$. This contradicts our assumption that the edges of $S$ are contained in less than $|S|$ hyperedges of $\cH'$ altogether.

Thus we have an injection $f'$ from the edges of $G$ inside $V_2$ and the crossing edges to distinct hyperedges in $\cH$ containing them. In the third step we are going to embed the remaining edges of $G$ (those inside $V_1$). Observe that each of them is contained in \emph{every} hyperedge of $\cH$. As the number of edges in $G$ is at most the number of hyperedges in $\cH$, we can choose a distinct remaining hyperedge of $\cH$ for every remaining edge of $G$ and we are done.
\end{proof}
\section{Small uniformity}

\subsection{The case $r < 2c$. Proof of Theorem \ref{r < 2c}}
We restate Theorem \ref{r < 2c} below for convenience.

\begin{thm*}
Suppose $r < 2c$. Then we have
\begin{enumerate}
\item [\textbf{(i)}] If $2<r$, then $1+c\lfloor\frac{n-2}{c-1}\rfloor\le R^c(B^rK_n)$.

\item [\textbf{(ii)}] If $c<r$, then $R(B^rK_{n_1}, \dots,B^rK_{n_c})\le \sum_{i=1}^c n_i$.

\item [\textbf{(iii)}]  If $c+1<r$ and $n>\frac{r-c-1}{c}(\binom{r}{2}+r-1)$, then $R^c(B^rK_{n}) \le \frac{c}{r-c-1}n$.

\item [\textbf{(iv)}] If $r=2c-1$ and $n>\frac{2c-3}{2c-1}(\binom{r}{2}+r-1)$, then $R^c(B^rK_{n}) \le (2c-1)\frac{n}{2c-3}$.

\end{enumerate}
\end{thm*}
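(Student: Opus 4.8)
<br>

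The plan is to prove the four upper/lower bounds in Theorem~\ref{r < 2c} using the link-hypergraph machinery from Section~\ref{prel}, together with the elementary inequality of Lemma~\ref{eni}. For part~\textbf{(i)} I would produce an explicit $c$-coloring of $\cK_N^r$ with $N = c\lfloor\frac{n-2}{c-1}\rfloor$ that avoids monochromatic Berge-$K_n$. Partition $V$ into $c$ blocks $V_1,\dots,V_c$ each of size $\lfloor\frac{n-2}{c-1}\rfloor$ (that is what the factor $c$ in front provides). Color a hyperedge $H$ with the smallest index $i$ such that $H$ omits at least one vertex of $V_i$; since $r<2c\le 2c$ and in fact $r<2c$ means an $r$-set cannot fully contain enough blocks, one checks every $H$ gets a color. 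Then in color $i$, every hyperedge meets $V_i$ in at most $|V_i|-1$ vertices, so the shadow graph restricted to a putative Berge-$K_n$ cannot see all of some block, and a counting argument (an $r$-set carries $\binom r2$ $2$-edges, Hall's condition as in Lemma~\ref{philip}) shows there is no Berge-$K_n$ in color $i$. The delicate point is getting the block sizes exactly right so the bound comes out as $1+c\lfloor\frac{n-2}{c-1}\rfloor$; I expect to spend the most care on verifying that no hyperedge is left uncolored, which forces the precise relation between $r$, $c$ and the block sizes.

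For part~\textbf{(ii)}, the natural approach is induction on $c$ via the link hypergraph. Take $N=\sum_{i=1}^c n_i$ and a $c$-coloring of $\cK_N^r$. Pick any vertex $u$ and look at its link $(r-1)$-graph $\cL_u$ with the inherited coloring on $N-1$ vertices. If $\cL_u$ already contains a monochromatic Berge-$K_{n_i}$ of color $i$ for some $i$, we are done since a Berge-$K_{n_i}$ in $\cL_u$ lifts to one in $\cK_N^r$ (add $u$ back into each hyperedge). Otherwise, by induction $\cL_u$ must have too few vertices relative to some reduced parameter list, and iterating this over all vertices, or counting how colors are forced, yields $N-1 < \sum (n_i-1)$ or a similar contradiction; the base case $c=1$ is immediate and $c<r$ guarantees $r-1\ge c$, so the induction stays in the regime $r'>c'$. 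I would use Lemma~\ref{eni} to convert the product bound on block sizes into the sum bound $\sum n_i$. The main obstacle here is bookkeeping: one has to decrement the $n_i$'s correctly and make sure the inherited coloring really behaves well under the lift, but conceptually this is the cleanest of the four.

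For parts~\textbf{(iii)} and~\textbf{(iv)} I would argue by a density/degree argument on the link hypergraphs combined with Lemma~\ref{philip} or Lemma~\ref{philip2}. Set $N$ to be the claimed bound, say $N=\frac{c}{r-c-1}n$ for~\textbf{(iii)}. Color a $2$-edge $uv$ with color $i$ if it lies in at least $\binom r2$ hyperedges of color $i$ (it may get several colors, as in the proof of Proposition~\ref{r>2c}); by Lemma~\ref{philip} it suffices to find a monochromatic $K_n$ in the resulting multicolored graph $G$ on $N$ vertices. Suppose not. Then for each color $i$ and each $(n-1)$-subset that would close up a $K_n$ there is a missing edge, and a careful count of how many hyperedges on a small vertex set can fail to produce the required $\binom r2$-multiplicity in any color forces $N < \frac{c}{r-c-1}n$: the key inequality is that a small set $U$ of size about $n$ lies in $\binom{N-|U|}{r-|U|}$ hyperedges, and if none of the $\binom{|U|}2$ pairs gets all $c$ colors then the total count of $(\text{hyperedge},\text{pair})$ incidences is bounded, and comparing the two bounds gives the ratio. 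Part~\textbf{(iv)}, the case $r=2c-1$, is the tight boundary of the $r<2c$ regime and should follow the same template but with the link taken once to reduce $r=2c-1$ to $r-1=2(c-1)$, at which point Theorem~\ref{r = 2c} (the $r=2c$ result, now with $c-1$ colors) applies to the link; the bound $(c-1)\frac{n}{c-2}$ is exactly what Theorem~\ref{r = 2c} gives after accounting for the vertex removed by passing to the link and for iterating. The hard part throughout parts~\textbf{(iii)}--\textbf{(iv)} will be choosing the right small vertex set $U$ and the right multiplicity threshold so that the double counting closes with the stated constant rather than a weaker one; I expect this optimization to be where essentially all the work lies, and where ``$n$ large enough'' is genuinely used to absorb lower-order terms.
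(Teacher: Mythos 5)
Your toolkit (block colorings, Hall/Lemma~\ref{philip}, link hypergraphs, Lemma~\ref{eni}) is the right one, but each of the four parts has a concrete gap. In \textbf{(i)} your coloring rule --- give $H$ the smallest $i$ such that $H$ \emph{omits at least one vertex} of $V_i$ --- collapses: the blocks have size $\lfloor\frac{n-2}{c-1}\rfloor$, far larger than $r$, so every hyperedge omits vertices of every block and everything gets color $1$, which certainly contains a Berge-$K_n$. The rule must be ``smallest $i$ with $|H\cap V_i|\le 1$,'' which exists exactly because $r<2c$; and the reason no color-$i$ Berge-$K_n$ exists is not a shadow/Hall count but simply that such a copy must contain two vertices of $V_i$ (the other blocks total at most $n-2$ vertices) while no color-$i$ hyperedge contains both, so that pair cannot be represented. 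In \textbf{(ii)} your induction on $c$ via the link does not close: $\cL_u$ is a complete $(r-1)$-graph on $\sum n_i-1$ vertices still carrying all $c$ colors and the same parameters $n_i$, so ``by induction $\cL_u$ has too few vertices'' yields nothing (the vertex count sits just \emph{below} the bound you would like to invoke), and $c<r$ only gives $c\le r-1$, so after one link step the hypothesis $c<r$ may fail. The paper's argument inducts on $\sum n_i$: delete $u$, obtain for each $i$ a color-$i$ Berge-$K_{n_i-1}$ on a set $A_i$, try to attach $u$ by Hall's theorem, and if a Hall violator $B_i$ exists for every color, count the $(r-1)$-sets of $\cL_u$ meeting every $B_i$ (at least $1+\sum_i|B_i|-c$ of them, using Lemma~\ref{eni} in the disjoint case), so some color $i$ supplies $|B_i|$ hyperedges through $u$ hitting $B_i$, contradicting the violation. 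That mechanism is entirely missing from your sketch.

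For \textbf{(iii)} the central count you propose is not meaningful: a set $U$ ``of size about $n$'' is not contained in any $r$-edge since $n>r$, so the quantity $\binom{N-|U|}{r-|U|}$ does not count anything, and no contradiction is extracted. The actual proof defines $E_i$ as the pairs lying in fewer than $\binom{r}{2}$ color-$i$ hyperedges and $V_i$ as the vertices touched by $E_i$, and shows every vertex lies in at most $2c-r+1$ of the sets $V_i$: otherwise one can choose one edge from each $E_i$ spanning at most $r-1$ vertices, and the at least $N-r+1$ hyperedges containing all of them cannot all fall among the at most $c\binom{r}{2}$ allowed by the definitions of the $E_i$. Averaging then gives some $|V_i|\le\frac{(2c-r+1)N}{c}$, hence $|V\setminus V_i|\ge n$, and Lemma~\ref{philip} applied to color $i$ on $V\setminus V_i$ finishes. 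For \textbf{(iv)} your reduction to Theorem~\ref{r = 2c} via one link step is not available: the link still carries $c$ colors, not $c-1$, and even if it did, that theorem would give $n+1$, not $(c-1)\frac{n}{c-2}$. The paper instead refines \textbf{(iii)}: if two of the classes, say $V_1$ and $V_2$, intersect, then all the remaining classes are pairwise disjoint and disjoint from $e_1\cup e_2$ (otherwise $c$ chosen edges span at most $2c-2=r-1$ vertices and the same contradiction appears), so $\sum_i|V_i|\le|V_1|+N$, some $V_i$ with $i\ge 2$ has size at most $\frac{N}{c-1}$, and again $|V\setminus V_i|\ge n$ with Lemma~\ref{philip} concluding. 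You would need to supply these structural claims; the optimization you anticipated is not where the difficulty lies.
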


To prove \textbf{(i)} we take a complete $r$-graph on $c\lfloor\frac{n-2}{c-1}\rfloor$ vertices.  We partition its vertex set into $c$ parts $V_1,\dots, V_c$, each of size $\lfloor\frac{n-2}{c-1}\rfloor$. For every hyperedge $H$, there is (at least) one part $V_i$ that $H$ intersects in at most one vertex since $r < 2c$. Then let the smallest such $i$ be the color of $H$. A Berge-$K_n$ of color $i$ has to contain at least two vertices $u,v$ from $V_i$, as the union of the other parts has size at most $n-2$. But there is no hyperedge of color $i$ containing $u,v$ (so the pair $u,v$ cannot be represented by a hyperedge of color $i$), a contradiction to our assumption that the Berge-$K_n$ of color $i$ contains the vertices $u,v$.

\smallskip
Now we prove \textbf{(ii)}. We use induction on $\sum_{i=1}^c n_i$, the cases when every $n_i$ is at most 2 are trivial as $r\le 2c$. Let us consider a $c$-colored complete $r$-graph on $\sum_{i=1}^c n_i$ vertices. We set aside a vertex $u$, then by induction there is a $B^rK_{n_i-1}$ of color $i$ on a subset $A_i$ of the remaining vertices, for every $i\le c$. If we can extend the $B^rK_{n_i-1}$ of color $i$, by adding the vertex $u$ and distinct hyperedges of color $i$ containing $uv$, for every $v\in A_i$, then we found the desired monochromatic $B^rK_{n_i}$ of color $i$. So we can assume for every $i\le c$, we cannot extend the $B^rK_{n_i-1}$ of color $i$ by $u$. Then by Hall's theorem, for every $i \le c$, there is a subset $B_i$ of vertices in $A_i$, such that the number of hyperedges of color $i$ containing $u$ and a vertex from $B_i$ is less than $|B_i|$.

Consider the ($(r-1)$-uniform) link hypergraph $\cL_u$ with the inherited coloring.
Let $\cH$ be the subfamily of $\cL_u$ consisting of the hyperedges of $\cL_u$ that intersect every $B_i$. 

\begin{clm}\label{newclaim}
$$|\cH|\ge 1+\sum_{i=1}^c |B_i|-c.$$
\end{clm}
\begin{proof}

If there is no vertex belonging to two different $B_i$s, then $\cH$ contains at least $\prod_{i=1}^c |B_i|$ hyperedges. Indeed, if we pick a vertex from every $B_i$, there is a hyperedge in $\cH$ containing them, as $r-1\ge c$. Moreover, there is such a hyperedge that contains no other vertices from $\bigcup_{i=1}^cB_i$, as $|\bigcup_{i=1}^cB_i|\le \sum_{i=1}^c(n_i-1)$, thus there are at least $c$ other vertices and $2c>r$. Such hyperedges of $\cH$ are counted only once when we pick one vertex from every $B_i$ $\prod_{i=1}^c |B_i|$ ways, thus $\cH$ contains at least $\prod_{i=1}^c |B_i|$ hyperedges.
In this case Lemma \ref{eni} finishes the proof of this claim.

If there is a vertex $v\in B_1\cap B_2$, let us pick a vertex $v_i\in B_i$ for $3\le i\le c$. There are at least $(\sum_{i=1}^c n_i)-1-(c-1)$ hyperedges in $\cL_u$ containing these at most $c-1$ vertices, and they all belong to $\cH$. This finishes the proof as $|B_i|<n_i$.

\end{proof}

By Claim \ref{newclaim} we have $|\cH|>\sum_{i=1}^c (|B_i|-1)$, hence there is a color $i$ such that $\cH$ contains at least $|B_i|$ hyperedges of color $i$, a contradiction.

\smallskip

To prove $\textbf{(iii)}$, assume $n$ is large enough and consider a $c$-colored complete $r$-graph with vertex set $V$, where $|V|=N=\frac{c}{r-c-1}n$. For each $1 \le i \le c$ let $E_i$ be the set of those edges, that are contained in less than $\binom{r}{2}$ hyperedges of color $i$, and let $V_i$ be the set of vertices incident to at least one edge in $E_i$. Let $p=2c-r$ and for $v \in V$ let us define $$m(v):=|\{i : 1 \le i \le c, \ v \in V_i\}|.$$

\begin{clm}\label{upperdegree}
For every $v \in V$ we have $m(v) \le p+1$.
\end{clm}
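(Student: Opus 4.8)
The plan is to argue by contradiction: assume some vertex $v$ has $m(v)=k\ge p+2=2c-r+2$, and exhibit a vertex set $T\subseteq V$ of size at most $r-1$ that is ``light in every color'', meaning that for each color $j$ the set $T$ is contained in fewer than $\binom r2$ hyperedges of color $j$. Then $T$ lies in fewer than $c\binom r2$ hyperedges of $\cK_N^r$ in total, which is impossible, since a set of size at most $r-1$ is contained in at least $\binom{N-|T|}{r-|T|}\ge N-r+1$ hyperedges and $N$ is large. As a harmless preliminary reduction I would first observe that we may assume every $V_i$ is non-empty: if $V_i=\emptyset$, then every $2$-edge lies in at least $\binom r2$ hyperedges of color $i$, so by Lemma~\ref{philip} the color-$i$ shadow graph equals $K_N$ with $N\ge n$, already giving a monochromatic Berge-$K_n$; consequently $E_j\neq\emptyset$ for every color $j$.

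To build $T$, let $I=\{i:v\in V_i\}$, so $|I|=k$. For each $i\in I$ choose $w_i$ with $\{v,w_i\}\in E_i$ (possible because $v\in V_i$), and for each $j\notin I$ choose any light edge $\{a_j,b_j\}\in E_j$; then put $T=\{v\}\cup\{w_i:i\in I\}\cup\{a_j,b_j:j\notin I\}$. The step I expect to be the crux is the size estimate
\[
|T|\le 1+k+2(c-k)=1+2c-k\le r-1,
\]
whose last inequality is exactly the assumption $k\ge 2c-r+2$ rearranged. The non-obvious point is that $T$ must not merely carry a light edge through $v$ for each color $i$ with $v\in V_i$, but must also ``spend'' a fresh pair of vertices on each of the remaining colors in order to block them as well; one then needs to notice that it is precisely the hypothesis $m(v)\ge p+2$ --- strictly larger than the target value $p+1$ --- that keeps $|T|$ down to $r-1$ (the hypothesis $c+1<r$ entering only to guarantee $p+2\le c$, so that such a value of $k$ can occur at all).

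Granting the size estimate, checking that $T$ is light in every color is routine. If a hyperedge $H\supseteq T$ has a color $j\in I$, then $\{v,w_j\}\subseteq T\subseteq H$ and, since $\{v,w_j\}\in E_j$, there are fewer than $\binom r2$ hyperedges of color $j$ through $\{v,w_j\}$. If instead $H$ has a color $j\notin I$, then $\{a_j,b_j\}\subseteq T\subseteq H$ and, since $\{a_j,b_j\}\in E_j$, again fewer than $\binom r2$ hyperedges of color $j$ pass through $\{a_j,b_j\}$. Summing over the $c$ colors bounds the number of hyperedges containing $T$ by less than $c\binom r2$, while $|T|\le r-1$ forces at least $N-r+1$ of them; for $N$ large this is the desired contradiction, so $m(v)\le p+1$.
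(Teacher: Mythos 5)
Your proof is correct and takes essentially the same route as the paper: assuming $m(v)\ge p+2$, you pick light edges through $v$ in those colors and arbitrary light edges in the remaining colors, observe that their endpoints span at most $r-1$ vertices, and contrast the at least $N-r+1$ hyperedges containing this set with the fewer than $c\binom{r}{2}$ permitted by the color counts. Your preliminary reduction guaranteeing each $E_i$ is nonempty (via Lemma~\ref{philip} when some $V_i=\emptyset$) is a small point the paper leaves implicit, but otherwise the arguments coincide.
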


\begin{proof}

By contradiction let us suppose that we have $m(v) \ge p+2$. Without loss of generality we can suppose that $v \in \bigcap_{i=1}^{p+2} V_i$. Then for $1 \le i \le p+2$ pick $e_i \in E_i$ such that each edge contains $v$ and for all $p+3 \le i \le c$ pick any $e_i \in E_i$. Then the cardinality of the vertex set of the endpoints of $\{e_i : 1 \le i \le c\}$ is at most $2c-p-1=r-1$. Let $\cH$ be the set of those hyperedges that contain every $e_i$ ($1 \le i \le c$). On the one hand the cardinality of $\cH$ is at least $N-r+1$, on the other hand $\cH$ contains at most $\binom{r}{2}$ hyperedges of each color, which contradicts our assumption on $n$.  
\end{proof}

By Claim \ref{upperdegree} we have $$\sum_{i=1}^{c}|V_i| \le (p+1)N.$$
This means that we have an $i$ with $1 \le i \le c$ such that $$|V_i| \le \frac{(p+1)N}{c}.$$

Which implies $|V\setminus V_i|\ge n$. All the edges inside $V\setminus V_i$ are contained in at least $\binom{r}{2}$ hyperedges of color $i$, thus Lemma \ref{philip} finishes the proof.


\smallskip

To prove $\textbf{(iv)}$ we follow the previous argument with a slight modification. Let $N = \lfloor\frac{c-1}{c-2}n\rfloor$. We will use the notation of the proof of \textbf{(iii)}. 

\begin{clm} There is a class $V_i$ with at most $\frac{N}{c-3/2}$ vertices.
\end{clm}

\begin{proof}
Observe first that if two different vertex classes, say $V_1$ and $V_2$ intersect, i.e., there are edges $e_1\in E_1$ and $e_2\in E_2$ sharing at least one vertex, then all the other classes are pairwise disjoint and also disjoint from the set $e_1 \cup e_2$. It is easy to see that otherwise we could find edges $e_3,e_4,...,e_c$ such that $e_i \in E_i$ ($1 \le i \le c$) and they are incident to at most $2c-2=r-1$ vertices. On the one hand, we have at least $N-r+1$ hyperedges that contain these edges, but on the other hand, among these hyperedges there can be at most $\binom{r}{2}$ hyperedges from each color class, which contradicts our assumption on $n$.

Let us consider an auxiliary graph $G$ on vertices $v_1,\dots,v_c$, where $v_i$ is connected to $v_j$ if and only $V_i$ intersects $V_j$. By the above argument, there are no independent edges in $G$, thus $G$ is either a star or a triangle (and potentially some isolated vertices). If $G$ is a star with center $v_1$, then every vertex of $V_1$ can be contained in at most one other $V_i$, which easily implies $\sum_{i=1}^{c}|V_i| \le |V_1|+N$. The statement follows for some $i\ge 2$.

If $G$ is a triangle with vertices $v_1,v_2,v_3$, then let $V'=V_1\cup V_2\cup V_3$. Observe that every vertex of $V'$ is contained in at most two of $V_1,V_2,V_3$, thus we have $2|V'|\ge |V_1|+|V_2|+|V_3|$. The other $V_i$s are disjoint from $V'$ and from each other. If the statement of the claim does not hold, we have the following contradiction. \[N=|V'|+|V\setminus V'|> |V'|+(c-3)\frac{N}{c-3/2}\ge \frac{|V_1|+|V_2|+|V_3|}{2}+N\frac{c-3}{c-3/2}>N\frac{c-3/2}{c-3/2}.\] 
\end{proof}



According to the above claim, $V_i$ has at most $\frac{N}{c-3/2}\le N-n$ vertices, thus $V\setminus V_i$ has at least $n$ vertices.

All the edges inside $V\setminus V_i$ are contained in at least $\binom{r}{2}$ hyperedges of color $i$, thus Lemma \ref{philip} finishes the proof again.

\subsection{Proof of Proposition \ref{poly}}

We restate Proposition \ref{poly} below for convenience. We will use the well-known result of Erd\H os \cite{erdos} stating that for any $n$, there is a two-coloring of $K_n$ with no monochromatic clique of size $\lceil 2\log n\rceil$.

\begin{proposition*} 
$R^c(B^3K_n)= \Omega\left(\frac{n^{\lceil\frac{c}{2}\rceil}}{(\log n)^{\lceil c/2\rceil-1}}\right)$.
\end{proposition*}

\begin{proof} We prove the statement by induction on $c$. The cases $c=1$ and $c=2$ are trivial. Note that the case $c=3$ is the result of \cite{STZZ} mentioned in the introduction, and note that it is enough to prove the statement for $c$ odd. Indeed, obviously $R^{c+1}(B^3K_n)\ge R^c(B^3K_n)$, while the stated lower bound is the same for $c$ and $c+1$. Still, our proof below works for every $c\ge 3$.

Let us assume we know the statement for $c$. We are going to prove it for $c+2$. More precisely, by induction we can find a $c$-coloring of the complete $3$-uniform hypergraph on $h(n)=\Omega(\frac{n^{\lfloor\frac{c}{2}\rfloor+1}}{(\log n)^{\lfloor c/2\rfloor}})$ vertices without a monochromatic copy of $B^3K_n$. We will show a $(c+2)$-coloring of the complete $3$-uniform hypergraph $\cK_m^3$ on $m = h(n)\lfloor n/(2c\log n)\rfloor $ vertices without a monochromatic copy of $B^3K_n$.

We partition the vertex set of $\cK_m^3$ into $\lfloor n/2c\log n\rfloor$ parts, each of size $h(n)$, and for each part we color the hyperedges completely contained in that part using the colors $1,2, \ldots, c$, by induction. This way we have colored all the hyperedges completely inside a part without a monochromatic copy of $B^3K_n$ in colors $1,2, \ldots, c$ (as a $B^3K_n$ is connected, it would need to be contained inside a part). 

Additionally, for each part, we color all the pairs contained in the part (i.e., the complete $2$-uniform graph on $h(n)$ vertices) with colors $c+1$ and $c+2$ in such a way that the largest monochromatic complete ($2$-uniform) graph has order less than $2 \log h(n) < 2c\log n$. (Here we used a well-known construction of Erd\H os.)

Each hyperedge is either completely contained in one of the parts (in which case we have already colored it), or it intersects exactly two of the parts or it intersects three of the parts. If a hyperedge contains the pair $u,v$ from a part and a third vertex from a different part, then we color it by the color of the pair $uv$ (thus it either gets the color $c+1$ or $c+2$). If a hyperedge intersects three parts, we color it by $c+2$. This completes the coloring of all the hyperedges. Let us assume for a contradiction that there is a monochromatic copy of $B^3K_n$. As argued before, this copy cannot be in any of the colors $1,2, \ldots, c$, so it must be of the color $c+1$ or $c+2$. Moreover, by the pigeon-hole principle, this monochromatic copy contains a set $S$ of at least $2c\log n$ vertices from one of the parts (as the number of parts is $\lfloor n/2c\log n\rfloor$). Then, by construction, all of the pairs in $S$ must have been colored by the same color (either $c+1$ or $c+2$). This contradicts the discussion in the previous paragraph, finishing the proof.
\end{proof}

\subsection{Proof of Proposition \ref{3-uniform}}

We restate Proposition \ref{3-uniform} below for convenience.

\begin{proposition*} If $n\ge m>1$ and $n+m\ge 7$, then $n+m-3\le R(B^3K_n,B^3K_m)\le n+m-2$.

\end{proposition*}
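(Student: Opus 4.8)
For $R(B^3K_n,B^3K_m)\ge n+m-3$ I would exhibit a $2$-coloring of $\cK^3_{n+m-4}$ with neither a blue Berge-$K_n$ nor a red Berge-$K_m$. Partition the vertex set into $A$ and $B$ with $|A|=n-2$, $|B|=m-2$, and color a triple blue if it has at least two vertices in $A$ and red otherwise; since a $3$-set cannot have two vertices in each of $A$ and $B$, this is well defined and ``red'' means ``at least two vertices in $B$''. Any $n$-set $X$ meets $B$ in at least $n-|A|=2$ vertices $x,y$, and the edge $xy$ of $K_n$ can only be represented by a triple $\supseteq\{x,y\}$, which is $B$-heavy and hence red; so there is no blue Berge-$K_n$, and symmetrically (using $|B|=m-2$) no red Berge-$K_m$. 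The only role of $n+m\ge 7$ here is to make $\cK^3_{n+m-4}$ nonempty.

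\textbf{Upper bound, set-up and base cases.} I would prove $R(B^3K_n,B^3K_m)\le n+m-2$ by induction on $n+m$, freely using $R(\mathbf H_1,\mathbf H_2)=R(\mathbf H_2,\mathbf H_1)$ so that the inductive bound $R(B^3K_a,B^3K_b)\le a+b-2$ is available whenever $\max(a,b)>1$, $a+b\ge 7$, and $a+b<n+m$. The base cases are $m=2$, where directly $R(B^3K_n,B^3K_2)=n$ (a red triple is itself a red Berge-$K_2$; if all triples are blue, then since every pair lies in $n-2\ge 3$ triples a Hall argument shows $\cK^3_n$ itself is a blue Berge-$K_n$), and the single remaining pair $(n,m)=(4,3)$ with $n+m=7$, where $R(B^3K_4,B^3K_3)=5$ is checked by hand: on $4$ vertices a blue Berge-$K_4$ is impossible because $\binom 42>\binom 43$, so one only has to avoid a red Berge-$K_3$ there, and the $5$-vertex case is finite.

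\textbf{Inductive step.} Assume $m\ge 3$ and $n+m\ge 8$, and take a $2$-coloring of $\cK^3_N$, $N=n+m-2$, with vertex set $V$. Delete a vertex $u$ and apply the induction hypothesis to $\cK^3_{N-1}$ on $V\setminus\{u\}$ twice, with index pairs $(n-1,m)$ and $(n,m-1)$: both are valid pairs of sum $n+m-1$, and both bounds equal $N-1$. If either application produces a blue Berge-$K_n$ or a red Berge-$K_m$ we are done; otherwise we obtain a blue Berge-$K_{n-1}$ on a set $A_1$ with $|A_1|=n-1$ and a red Berge-$K_{m-1}$ on a set $A_2$ with $|A_2|=m-1$, neither using a triple through $u$. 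Now I would try to extend one of them by $u$: to upgrade the blue Berge-$K_{n-1}$ to a blue Berge-$K_n$ on $A_1\cup\{u\}$ it suffices to choose a distinct blue triple $h_v\supseteq\{u,v\}$ for each $v\in A_1$, and since every $h_v$ contains $u$ while none of the $\binom{n-1}2$ triples already used does, nothing further needs checking. By Hall's theorem this fails only if some nonempty $B_1\subseteq A_1$ has fewer than $|B_1|$ blue triples through $u$ meeting $B_1$; symmetrically the red extension fails only via a nonempty $B_2\subseteq A_2$ with fewer than $|B_2|$ red triples through $u$ meeting $B_2$. So assume both extensions fail.

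\textbf{The contradiction, and the main obstacle.} Pass to the link graph $\cL_u$, the complete graph on $V\setminus\{u\}$ with the coloring inherited from the triples $\{u\}\cup e$, and let $\cH$ be its set of edges meeting both $B_1$ and $B_2$. Each blue edge of $\cH$ is one of the fewer than $|B_1|$ blue triples through $u$ meeting $B_1$, and each red edge one of the fewer than $|B_2|$ red triples meeting $B_2$, so $|\cH|\le|B_1|+|B_2|-2$. If $B_1\cap B_2=\varnothing$, then $\cH$ is exactly the $|B_1||B_2|$ edges with one endpoint in each of $B_1,B_2$, and $|B_1||B_2|\ge|B_1|+|B_2|-1$ is a contradiction. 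If some $v\in B_1\cap B_2$, then all $N-2$ edges of $\cL_u$ at $v$ lie in $\cH$, forcing $N-2\le|\cH|\le|B_1|+|B_2|-2\le(n-1)+(m-1)-2=N-2$; equality pins down $B_1=A_1$, $B_2=A_2$, $\cH$ equal to the star at $v$, and moreover that every blue triple through $u$ meeting $A_1$ and every red triple through $u$ meeting $A_2$ passes through $v$. Taking $x\in A_1\setminus\{v\}$ and $x'\in A_2\setminus\{v\}$ with $x\ne x'$ --- possible since we are past $n=m=3$ --- the triple $\{u,x,x'\}$ is forced to be simultaneously blue and red, the final contradiction; hence one extension succeeds and the induction goes through. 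The delicate part is precisely this intersecting case: the edge count is exactly tight, and one must squeeze out the extra rigidity (that all the relevant monochromatic triples through $u$ pass through $v$) to manufacture the bicolored triple. Verifying that the pairs for which this last step would break down are exactly those excluded by $n+m\ge 7$ is where care is needed, and the residual gap of one in the statement is precisely the gap at the base case $(4,3)$, where the lower and upper bounds coincide at $n+m-2$.
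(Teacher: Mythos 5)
Your proposal is correct, and its skeleton coincides with the paper's: the lower bound construction (parts of sizes $n-2$ and $m-2$, majority colouring) is identical, the upper bound is by the same induction (delete a vertex $u$, apply the hypothesis to the pairs $(n-1,m)$ and $(n,m-1)$ to get a blue Berge-$K_{n-1}$ and a red Berge-$K_{m-1}$ avoiding $u$, then extend one of them through $u$), and both treatments leave the base cases $m=2$ and $(n,m)=(4,3)$ to a routine finite check. Where you genuinely diverge is the extension step. The paper notes that the link graph of $u$ is a $2$-coloured complete graph on $n+m-3$ vertices, hence contains a monochromatic spanning tree, say blue, and then Lemma \ref{feszfa} (rooted at some $v\notin A$) instantly produces distinct blue triples through $u$ covering all of $A$, so the corresponding Berge clique extends with no case analysis. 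You instead suppose both extensions fail, extract Hall blocking sets $B_1\subseteq A_1$, $B_2\subseteq A_2$, and count the link edges of $u$ meeting both $B_1$ and $B_2$; this is essentially the mechanism the paper uses to prove Theorem \ref{r < 2c} \textbf{(ii)} (via Lemma \ref{eni}), but run at the sharper threshold $n+m-2$, which forces you through the tight equality case $B_1=A_1$, $B_2=A_2$, $\cH$ equal to the star at $v$, and the rigidity statement that every relevant monochromatic triple through $u$ passes through $v$, whence the triple $\{u,x,x'\}$ yields the contradiction. I checked this equality analysis and it works (in the inductive step $n\ge 4$ and $m\ge 3$, so $x\in A_1\setminus\{v\}$ and $x'\in A_2\setminus\{v\}$ with $x\ne x'$ can always be chosen, and the $m=2$ base case indeed needs only $n\ge 5$, which holds); the one verbal slip is that the final triple is forced to be \emph{neither} blue nor red rather than ``simultaneously blue and red'', which is still the desired contradiction. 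In short, the paper's spanning-tree argument buys brevity and avoids any extremal/tightness discussion, while your route avoids Lemma \ref{feszfa} and the monochromatic-spanning-tree fact entirely and stays within the Hall-plus-counting technique used elsewhere in the paper, at the price of a more delicate equality analysis.
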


For the upper bound in we use induction on $n+m$. The statement is trivial for $m=2$. The other base case $n=4$, $m=3$ can be proved by a simple case analysis. Consider a $2$-colored complete $3$-graph on $n+m-3$ vertices, and set aside a vertex $u$. By induction there are both a blue $B^3K_{n-1}$ with vertex set $A$ and a red $B^3K_{m-1}$ with vertex set $B$ on the remaining vertices. We will show that there is either a blue $B^3K_n$ on $A\cup \{u\}$ or a red $B^3K_m$ on $B\cup \{u\}$.

Consider the link graph of $u$. This is a two-colored complete graph, thus there is a monochromatic spanning tree in it, say, a blue one. Pick a vertex $v\not\in A$ and apply Lemma \ref{feszfa}. Then every vertex $w\in A$ can be connected to $u$ using the blue hyperedge containing $u$ and $f(w)$. These hyperedges are distinct from each other as $f$ is a bijection, and distinct from the hyperedges that form the blue Berge-$K_{n-1}$ on $A$, as those do not contain $u$.

\smallskip

For the lower bound, we take an $(m-2)$-set $U$ and an $(n-2)$-set $U'$. Every 3-edge $H$ shares at least two vertices with either $U$, in which case we color $H$ red, or with $U'$, in which case we color $H$ blue. A blue Berge-$K_n$ contains two vertices from $U$, but those cannot be connected with an edge contained in a blue hyperedge, a contradiction. A red Berge-$K_m$ leads to contradiction similarly.

\subsection{Berge trees. Proof of Theorem \ref{trees}}

The next lemma deals with small trees. It will serve as the base case of induction later in the proof of Theorem \ref{nagyfa}. Lemma \ref{kisfa} combined with Theorem \ref{nagyfa} gives Theorem \ref{trees}.








\begin{lemma}\label{kisfa}
Suppose $T_i, T_i'$ are two trees on $i$ vertices. Then we have $R(B^3T_2,B^3T'_2)=3$, $R(B^3T_3,B^3T'_3)=4$ and $R(B^3T_4,B^3T'_4)=5$. $R(B^3S_5,B^3S_5)=6$ and if at least one of $T_5$ and $T_5'$ is not the star, then $R(B^3T_5,B^3T'_5)=5$. If $1<j<i\le 5$, then we have $R(B^3T_i,B^3T'_j)=i$.
\end{lemma}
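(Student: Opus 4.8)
The plan is to handle the lemma case by case, exploiting throughout the fact that a $3$-edge contains exactly three $2$-edges, so it can represent at most three graph-edges, and a small tree has few edges. First I would dispose of the tiny cases: for $R(B^3T_2,B^3T_2')$ note that $\cK_3^3$ has a single hyperedge, which is a Berge-$T_2$ in either color, and on $2$ vertices there is no hyperedge at all, giving the value $3$; similarly $\cK_4^3$ has $4$ hyperedges, and a monochromatic pair among them already yields a Berge-$P_3=$Berge-$T_3$, while on $3$ vertices one hyperedge cannot represent the two edges of $T_3$, so the value is $4$. For $R(B^3T_4,B^3T_4')=5$ the lower bound is the construction from Proposition \ref{3-uniform} (or just: $\cK_4^3$ with all hyperedges blue is Berge-$K_4$-free in red, and $4$ hyperedges on $4$ vertices — one checks a single color class of $\le 2$ hyperedges cannot carry the $3$ edges of a tree on $4$ vertices once we also need the other color nonempty — actually cleaner: color by the construction with $|U|=|U'|=2$). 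For the upper bound on $5$ vertices, $\cK_5^3$ has $\binom{5}{3}=10$ hyperedges; a majority color has at least $5$ hyperedges on $5$ vertices, and I would argue directly that $5$ hyperedges on $5$ vertices always contain a Berge copy of every tree on $4$ vertices (both $P_4$ and $S_4$), using Lemma \ref{philip} or a direct greedy embedding.

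Next the genuinely interesting cases on $5$ vertices. For $R(B^3S_5,B^3S_5)=6$: the lower bound is the construction from Theorem \ref{r=2c} with $c=2$, $r=4$ — wait, here $r=3$ — so instead I would use: on $5$ vertices, fix a vertex $w$ and color a hyperedge red iff it contains $w$, blue otherwise; then the blue hyperedges all lie in the $4$-set $V\setminus\{w\}$, so the blue shadow is $K_4$, which has no $S_5$; and the red hyperedges all contain $w$, so in the red shadow every edge not at $w$ lies in a red hyperedge only if that hyperedge is $\{w,\cdot,\cdot\}$, hence the red shadow at $w$ has degree $\le\binom{4}{2}$ but more to the point a red Berge-$S_5$ centered anywhere is obstructed because $S_5$ has $4$ edges but only $\binom{4}{2}=6$... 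I need to be careful here; the right statement is that $S_5$ has $4$ edges all sharing a center, and each red hyperedge (containing $w$) contains at most two edges through any fixed non-$w$ vertex, so a star centered at $x\ne w$ needs $4$ distinct red hyperedges through $x$, each being $\{w,x,y\}$ — but there are only $3$ such, giving the obstruction; a star centered at $w$ needs $4$ red hyperedges, each of the form $\{w,y_1,y_2\}$ representing one spoke $wy_i$, and $4$ hyperedges suffice only if ... one checks the count fails. For the upper bound $R(B^3S_5,B^3S_5)\le 6$, I would show any $2$-coloring of $\cK_6^3$ has a monochromatic Berge-$S_5$: a majority color has $\ge\binom{6}{3}/2=10$ hyperedges; assign to each vertex $v$ its ``majority degree'' in that color and use averaging plus Hall to find $5$ distinct hyperedges in one color, each containing a fixed center $v$ and a distinct spoke. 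For the last family, if (say) $T_5'\ne S_5$ then $T_5'\in\{P_5,$ the ``chair''/spider $S_{1,1,2}\}$; I would show $R(B^3T_5,B^3T_5')=5$ by taking a majority color in $\cK_5^3$ (which has $\ge 5$ hyperedges) and embedding a Berge copy of the relevant tree on $5$ vertices greedily, and the lower bound $\ge 5$ is immediate since $\cK_4^3$ monochromatically blue is red-Berge-$K_4$-free hence has no red Berge copy of a $5$-vertex tree, and $4$ blue hyperedges on $4$ vertices cannot be a Berge copy of a tree on $5$ vertices (wrong vertex count).

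Finally, the mixed-size cases $R(B^3T_i,B^3T_j')=i$ for $1<j<i\le 5$: the lower bound is that $\cK_{i-1}^3$ with all hyperedges blue contains no blue Berge-$T_i$ (a Berge-$T_i$ needs $i$ vertices) and no red Berge-$T_j'$ (no red hyperedges at all, and $j>1$ so $T_j'$ has an edge). The upper bound: on $i$ vertices, if the blue hyperedges already form a Berge-$T_i$ we are done; otherwise I would show the red hyperedges suffice for a Berge-$T_j'$ with $j\le i-1$, using that whenever blue fails to give a Berge-$T_i$ there are ``enough'' red hyperedges — here is where the bookkeeping is most delicate, and I expect this to be the main obstacle: making precise the trade-off ``not enough blue hyperedges to build a tree on $i$ vertices $\Rightarrow$ enough red hyperedges to build a smaller tree'' uniformly over all the tree shapes. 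I would organize this via Lemma \ref{philip} (the blue shadow restricted to edges in $\ge 3$ blue hyperedges is $T_i$-free, so sparse, forcing many red hyperedges) together with Lemma \ref{feszfa} to turn a monochromatic spanning connected structure on $j$ vertices into the required Berge tree, and finish with the small remaining configurations by direct inspection, since $i\le 5$ keeps the number of cases bounded.
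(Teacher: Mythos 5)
The main problem is your lower bound for $R(B^3S_5,B^3S_5)>5$: the proposed coloring of $\cK_5^3$ (red exactly the hyperedges containing a fixed vertex $w$, blue the rest) does \emph{not} work. The vertex $w$ lies in all six red hyperedges, and a red Berge-$S_5$ centered at $w$ exists: writing $V\setminus\{w\}=\{y_1,y_2,y_3,y_4\}$, represent the spokes $wy_1,wy_2,wy_3,wy_4$ by the distinct red hyperedges $\{w,y_1,y_2\},\{w,y_2,y_3\},\{w,y_3,y_4\},\{w,y_4,y_1\}$. So the step you left unverified (``one checks the count fails'') in fact fails you at the center $w$. Note that on five vertices a monochromatic Berge-$S_5$ exists if and only if some vertex lies in at least four hyperedges of one color (Hall's condition is easily checked for four hyperedges through a vertex), so any valid construction must have every vertex in exactly three blue and three red hyperedges; this is exactly what the paper's cyclic coloring ($v_iv_{i+1}v_{i+2}$ blue, indices mod $5$) achieves, and your coloring gives $w$ red degree six.

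Two further gaps. First, for the non-star $5$-vertex case your plan ``take a majority color ($\ge 5$ hyperedges) and greedily embed the relevant tree'' breaks precisely when the majority color is the one whose target is $S_5$: the cyclic color class just mentioned has five hyperedges and contains no Berge-$S_5$. The paper instead splits into cases: if some vertex lies in four hyperedges of one color, that color contains a Berge copy of \emph{every} $5$-vertex tree; otherwise every vertex is incident to exactly three hyperedges of each color and one finds the non-star tree directly. (Your argument can be patched: if the star's color has at least six hyperedges, averaging gives a vertex of degree four in it, and otherwise the non-star's color has at least five hyperedges.) Second, the mixed cases $1<j<i\le 5$ are only an intention, not an argument, and the tools you name are ill-suited there: Lemma \ref{philip} needs every relevant pair in at least $\binom{3}{2}=3$ hyperedges, which you cannot guarantee on at most five vertices, and Lemma \ref{feszfa} addresses a different situation. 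The paper settles these cases immediately from the local observation already used for the diagonal cases: three same-colored hyperedges through a vertex yield a Berge copy of every tree on at most four vertices, and four same-colored hyperedges through a vertex of $\cK_5^3$ yield every tree on five vertices. Finally, a small slip: $S_5$ has four edges, so in your upper bound $R(B^3S_5,B^3S_5)\le 6$ you need four (not five) distinct hyperedges through the chosen center; the paper simply takes any vertex, notes that at least four of the ten hyperedges through it share a color, and applies Hall.
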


\begin{proof} It is obvious that if $j\le i$, then to find a monoblue $T_i$ or a monored $T_j$ we need at least $i$ vertices and at least $i+j-3$ hyperedges. These imply all the lower bounds except for $R(B^3S_5,B^3S_5)>5$. To show that, let us take five vertices $v_1,\dots, v_5$ and color the hyperedges of the form $v_iv_{i+1}v_{i+2}$ (where the addition is modulo $5$) blue, and the remaining hyperedges red. Then every vertex is incident to exactly three blue and three red hyperedges, thus there is no monochromatic Berge-$S_5$.

Let us prove now the upper bounds. It is obvious that $R(B^3T_2,B^3T'_2)\le 3$ and \\ $R(B^3T_3,B^3T'_3)\le 4$. To show $R(B^3T_4,B^3T'_4)=5$, observe that an arbitrary vertex $v$ is contained in three hyperedges of the same color, say blue. It is easy to see that those three hyperedges form a Berge-$S_4$ and a Berge-$P_4$ as well.

Let us assume now $T_5$ is not a star and consider $R(B^3T_5,B^3T'_5)$. Observe that if four hyperedges contain the same vertex, and we have five vertices altogether, they form not only $S_5$ but also any of the other $5$-vertex trees. Thus the only remaining case is when every vertex is incident to exactly three blue and three red hyperedges. It is easy to check that we can find a blue $T_5$ in this case.

If we have six vertices, any vertex is contained in at least four hyperedges in one of the colors. It is easy to see that those four hyperedges form a Berge-$S_5$, showing $R(B^3S_5,B^3S_5)=6$.


It is left to deal with the case $R(B^3T_i,B^3T'_j)=i$ if $1<j<i\le 5$. The lower bound follows by coloring all the hyperedges red on less than $i$ vertices. For the upper bound, in case $j=2$, there cannot be any red hyperedges, while in case $j=3$, there can be at most one red hyperedges. In both cases one can easily find the blue $B^3T_i$. In case $j=4$ we have $i=5$ and if there is no blue $B^3T_5$ on five vertices, then we can find a red $B^3T'_5$ for any $T_5'\neq S_5$. At least one of those contains $T'_4$, finishing the proof.
\end{proof}

\begin{thm}\label{nagyfa} Let $T_1$ and $T_2$ be trees with $6\le n=|V(T_1)|\ge |V(T_2)|$. Then $$R(B^3T_1,B^3T_2)=n.$$

\end{thm}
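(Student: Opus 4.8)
\emph{Setup and induction.} The lower bound $R(B^3T_1,B^3T_2)\ge n$ is immediate: colouring every hyperedge of $\cK_{n-1}^3$ blue produces no red Berge-$T_2$ and no blue Berge-$T_1$, the latter simply because $T_1$ has $n>n-1$ vertices. For the upper bound I would induct on $|V(T_1)|+|V(T_2)|$; the base cases, in which the larger tree has at most $5$ vertices, are supplied by Lemma~\ref{kisfa}, so assume $n\ge6$. Fix a blue/red colouring of $\cK_n^3$ on a vertex set $V$ of size $n$ and suppose, for contradiction, that there is neither a blue Berge-$T_1$ nor a red Berge-$T_2$. A standing tool: by Lemma~\ref{philip2} with $t=2$ (any three vertices span at most two edges of a tree), the graph $G_b$ of pairs contained in at least two blue hyperedges is $T_1$-free and likewise $G_r$ is $T_2$-free, while for $n\ge6$ every pair lies in $n-2\ge4$ hyperedges, so $G_b\cup G_r=K_n$. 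Now apply Lemma~\ref{ujfa} to $T_1$ and treat its three cases.

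\emph{Case (i), the principal case.} Let $z$ be the non-leaf adjacent to exactly one non-leaf $y$, let $\ell_1,\dots,\ell_k$ ($k\ge1$) be the leaf-neighbours of $z$, and put $T'=T_1-\{z,\ell_1,\dots,\ell_k\}$, a tree on $n'=n-k-1$ vertices containing $y$, where $T'$ is a non-star on $5$ vertices or has at least $6$ vertices. Either way $R(B^3T',B^3T_2)\le n$: from Lemma~\ref{kisfa} when the larger of $T',T_2$ has at most $5$ vertices, and otherwise from the induction hypothesis (swapping the two trees if $|V(T_2)|>n'$, noting that $|V(T_1)|+|V(T_2)|$ has strictly dropped). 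As there is no red Berge-$T_2$, we obtain a blue Berge-$T'$; restricting the colouring to a well-chosen subset beforehand, we may assume its vertex image is a prescribed $n'$-set $A$, with all hyperedges inside $A$ and with $y$ sent to a convenient $a\in A$. Put $B=V\setminus A$, so $|B|=k+1$. Now extend: use the $k+1$ vertices of $B$ as the images of $z,\ell_1,\dots,\ell_k$; declaring $w\in B$ to be the image of $z$, it suffices to find $k+1$ further distinct blue hyperedges, one containing $\{a,w\}$ and one containing $\{w,u\}$ for every $u\in B\setminus\{w\}$ --- these being automatically distinct from the $n'-1$ hyperedges already used, since those lie inside $A$. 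Passing to the link graph $\cL_w$ (a $2$-coloured $K_{n-1}$), what is wanted is a system of distinct representatives by blue edges of $\cL_w$ for the $k+1$ vertices $\{a\}\cup(B\setminus\{w\})$; by Hall's theorem this is just a lower bound on the number of blue edges of $\cL_w$ meeting a given subset of those vertices. Hence the extension succeeds unless, for every admissible $w$ (and every admissible choice of the blue Berge-$T'$, hence of $a$), those vertices have very small blue degree in the relevant links --- that is, large red degree --- and in that situation I would assemble a red Berge-$T_2$ directly out of the red-dense parts of the links, using Lemma~\ref{feszfa} to spread the edges of $T_2$ over distinct red hyperedges and the classical fact that one colour class of any $2$-coloured complete graph is spanning and connected. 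Either outcome contradicts our assumption.

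\emph{Cases (ii) and (iii).} In case (ii), $T_1$ has both an independent and an adjacent dominating pair of edges --- so $T_1$ is a double star or another ``compact'' tree --- and in case (iii), $T_1$ is $S_n$ or $P_6$; here I would argue directly rather than by induction. Choosing a vertex $v$, in $\cL_v$ one colour class, say blue, is spanning and connected, hence contains a spanning tree $\tau$, which furnishes the $n-2$ blue hyperedges $\{v,x,y\}$ with $xy\in E(\tau)$; via Lemma~\ref{feszfa} these realise a large family of blue Berge-subgraphs on $V\setminus\{v\}$, and the single extra hyperedge needed to involve $v$ itself (so as to span all $n$ vertices) comes either from a cycle in blue-$\cL_v$ (when blue-$\cL_v$ is not a tree) or from a blue hyperedge avoiding $v$; the special shape of $T_1$ makes the bookkeeping close. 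For $P_6$ and the stars one must additionally exclude the ``balanced'' link colourings, exactly as in the proof of Lemma~\ref{kisfa}.

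\emph{Main obstacle.} The heart of the matter is the extension step in case (i): securing the $k+1$ additional blue hyperedges when $k$ is large, and, when this fails, extracting a red Berge-$T_2$ from the resulting red-dense links (one must also watch how much one can control the location of the blue Berge-$T'$, and of $a$, when $|V(T_2)|>n'$, as well as distinctness of hyperedges in the presence of ``spare'' vertices). The most delicate sub-case is the diagonal $|V(T_1)|=|V(T_2)|=n$, where a red Berge-$T_2$ must occupy all $n$ vertices, so one cannot afford to set a vertex aside; handling it may require reducing $T_2$ via Lemma~\ref{ujfa} as well and running a symmetric argument. The recurring minor nuisance in cases (ii)--(iii) is the ``off-by-one'': a monochromatic connected spanning subgraph of a link on $n-1$ vertices yields only $n-2$ hyperedges, one short of the $n-1$ needed for a spanning Berge-tree, and locating that last hyperedge is where most of the remaining casework lives.
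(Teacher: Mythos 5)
Your overall plan (induction with Lemma~\ref{kisfa} as base, Lemma~\ref{ujfa} to reduce $T_1$, then re-attach the deleted broom) points in the right direction, but the two places where you wave your hands are exactly where the theorem's difficulty lives, and as written they do not go through. First, the placement step: you ``restrict the colouring to a well-chosen subset beforehand'' to force the blue Berge-$T'$ to live on a prescribed $n'$-set $A$ with all its hyperedges inside $A$. That requires $R(B^3T',B^3T_2)\le n'=|A|$, which is false whenever $|V(T_2)|>n'$ (in particular in the diagonal case $|V(T_2)|=n$, where a red Berge-$T_2$ must use all $n$ vertices and no vertex can be set aside). Without this, the hyperedges of the blue Berge-$T'$ may protrude into $B$, and your distinctness claim for the extension hyperedges collapses. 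You acknowledge this and suggest ``reducing $T_2$ as well,'' which is indeed how the paper's Case~4.3 proceeds (it shrinks \emph{both} trees via Lemma~\ref{ujfa}(i) and applies induction on a common $n'$-set), but you never carry it out, and it interacts with the second gap below.

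Second, the extension itself: you need $k+1$ new blue hyperedges through the re-attached vertices, and your fallback when Hall's condition fails is ``assemble a red Berge-$T_2$ directly out of the red-dense parts of the links.'' That is not an argument --- turning local red density at one vertex into a spanning red Berge-$T_2$ on up to $n$ vertices is essentially the whole theorem, and nothing in Lemma~\ref{feszfa} or connectivity of a colour class delivers it. The paper avoids this dichotomy altogether by introducing \emph{deep blue} and \emph{deep red} edges (pairs all of whose hyperedges have one colour) and doing the case analysis on the colouring, not only on the tree: if some vertex lies on no deep edge, one deletes just that vertex and extends by a single hyperedge (Case~1); if deep edges of both colours exist, the deleted broom is re-attached through a retained endpoint of a deep blue edge, so all the $k+1$ extension hyperedges are automatically blue and automatically distinct from the inductively found ones (Case~4.3); the remaining configurations (all vertices on deep blue edges, $T_1$ a star, $P_6$, double stars) get dedicated arguments via Lemma~\ref{philip2} and ad hoc Hall-type reasoning (Cases~2, 3, 4.1, 4.2). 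Your cases (ii)--(iii) are likewise only sketched (``the bookkeeping closes''), and the off-by-one problem you yourself identify there is real. So the proposal, as it stands, has genuine gaps at the extension step and in the diagonal case; some organising idea playing the role of deep edges is missing.
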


\begin{proof} We apply induction on $n$ and also assume indirectly that we are given a blue-red $\cK_n^3$ that does not contain a monoblue $T_1$, nor a monored $T_2$. We dealt with the base cases $n\le 5$ in Lemma \ref{kisfa}, thus we assume $n\ge 6$. We will use multiple times Lemma \ref{philip2} with $r=3$ and $t=2$. Both $T_1$ and $T_2$ are triangle-free, thus it implies that we cannot find a copy of $T_1$ such that each of its edges are contained in at least two blue hyperedges, and similarly we cannot find a copy of $T_2$ such that each of its edges are contained in at least two red hyperedges.

Let us introduce some definitions.
We call an edge $uv$ \textit{deep red} if all hyperedges containing both $u$ and $v$ are red, and \textit{deep blue} if all hyperedges containing both $u$ and $v$ are blue. 
Note that a vertex cannot be incident to both a deep blue and a deep red edge, as there is a hyperedge containing those two edges.

\ 

\textbf{Case 1.} There is a vertex $v$ in the blue-red $\cK_n^3$ that is not incident to  a deep blue, nor to a deep red edge. If $T_1=T_2=S_6$, then $n=6$ and then $v$ is contained in at least five hyperedges of one of the colors, say blue. Every other vertex is contained in at least one of those blue hyperedges, otherwise $v$ is incident to a deep red edge. Let $E_0$ be the set of edges incident to $v$ and blue be color 1. Consider $\Gamma_1(E_0)$, a matching in it covering $A$ would finish the proof. Otherwise there is a subset $A'$ of $A$ with less than $|A'|$ neighbors in $\Gamma_1(E_0)$ by Hall's theorem. Obviously $5>|A'|>1$, but than $A\setminus A'$ has $1\le i\le 3$ vertices. This means $v$ and a set of its $i$ neighbors have at least $i+1$ 3-edges that each contain $v$, which is impossible. 


Let us assume now that $T_1$ and $T_2$ are not both $S_6$. Let $T_i'$ be a tree that is obtained from $T_i$ by deleting a leaf such that at least one of $T_1'$ and $T_2'$ is not $S_5$.

Let us consider a blue-red $\cK_n^3$ and assume it contains neither a blue Berge-$T_1$ nor a red Berge-$T_2$. Let us delete an arbitrary vertex $v$. Then by induction we either find a blue Berge-$T_1'$ or a red Berge-$T_2'$ in the blue-red $\cK_{n-1}^3$ we obtained by deleting $v$. Let us assume it is a blue Berge-$T_1'$, then there is a vertex $u$ such that the edge $uv$ would extend $T_1'$ to $T_1$. Note that the hyperedges forming that blue Berge-$T_1'$ do not contain $v$. As $uv$ is not deep red, we can find a blue hyperedge $H$ containing both $u$ and $v$. Adding that to the blue Berge-$T_1'$ we obtain a blue Berge-$T_1$, as $H$ is distinct from the hyperedges in the the blue Berge-$T_1'$. In case we found a red Berge-$T_2'$, we proceed similarly. 


\ 

\textbf{Case 2.} Every vertex is incident to a deep blue edge, or every vertex is incident to a deep red edge. We will assume every vertex is incident to a deep blue edge, the other case follows similarly, as we do not use $|V(T_1)|\ge |V(T_2)|$. Consider an arbitrary edge $uv$. If it is deep blue, it is contained in at least $n-2$ blue hyperedges. If not, then there are $u',v'$ such that $uu'$ and $vv'$ are deep blue by our assumption. Note that if $u'\neq v'$, then $uv$ is contained in at least two blue hyperedges: $\{u,u'v\}$ and $\{u,v,v'\}$. 

\textbf{Case 2.1.} Every vertex is incident to exactly one deep blue edge. Then we take a copy of $T_1$ in the shadow graph. 
Every edge of that copy of $T_1$ is contained in at least two blue hyperedges, and then we can find a blue Berge-$T_1$ using Lemma \ref{philip2}, a contradiction. 

\textbf{Case 2.2.} There are two adjacent deep blue edges. We take a copy of $T_1$ in the shadow graph that contains them. Let $E_0$ be the set of edges of that copy of $T_1$. We consider $\Gamma_1(E_0)$, where color 1 is blue. A matching covering $A$ would lead to a contradiction, thus there is a set $A'\subset A$ with $|N(A')|<|A'|$ by Hall's theorem. We pick a minimal $A'$ with this property. Then $A'$ has to contain a vertex $a=uv$ of degree $1$ in $\Gamma_1(E_0)$, otherwise the number of edges in $\Gamma_1(E_0)$ between $A'$ and $N(A')$ is at least $2|A'|$ and at most $2|N(A')|$ (since vertices of $B$ have degree at most 2), a contradiction. Thus $a\in A'$ is connected to only one vertex $b=uvw\in B$. If no other element of $A'$ is connected to $b$, then $A'\setminus \{a\}$ is a smaller blocking set, a contradiction. Thus another subedge, say $uw$ of the hyperedge is in $A'$. Recall that if $uv$ is contained in only one blue hyperedge $uvw$, then the deep blue edge incident to $u$ is $uw$, and the deep blue edge incident to $v$ is $vw$. Hence $uw$ is contained in $n-2$ blue hyperedges. This means $|N(A')|\ge n-2$, but as $A$ corresponds to a tree, we have $n-2 \le |N(A')|<|A'|\le |A|\le n-1$. This implies $A'=A$. But as the copy of $T_1$ we took contains two deep blue edges, we have $|N(A)|\ge 2n-5$, a contradiction.

\ 

\textbf{Case 3.} $T_1$ or $T_2$ is a star.

Let us assume first that $T_1$ is a star and let $uv$ be a deep blue edge. If there is a blue hyperedge $vwz$ with $w\neq u\neq z$, then we can find a monoblue Berge-star with center $v$. Indeed, for an edge $vy$ with $u\neq y\neq w$ we use the blue hyperedge $uvy$, for the edge $vw$ we use the blue hyperedge $vwz$, while for the edge $vu$ we use the blue hyperedge $uvw$.

If, on the other hand, every hyperedge containing exactly one of $u$ and $v$ is red, then every edge other than $uv$ is contained in at least two red hyperedges. We pick a copy of $T_2$ not containing the edge $uv$, and Lemma \ref{philip2} finishes the proof. If $T_2$ is a star, the proof follows similarly, as we did not use $|V(T_1)|\ge |V(T_2)|$.

\ 

\textbf{Case 4.} There are deep blue and deep red edges, and $T_1$ and $T_2$ are both non-stars.

\textbf{Case 4.1.} $T_1$ or $T_2$ satisfies \textbf{(ii)} of Lemma \ref{ujfa}. We assume it is $T_1$, the other case follows similarly. Note that often there are multiple ways to choose the two edges of $T_1$ specified in \textbf{(ii)} of Lemma \ref{ujfa}. We pick them arbitrarily, except in case $T_1$ is a star with center $v$ and another edge $uw$ is attached to a leaf $u$, and we have to pick two adjacent edges. In that case we pick the edge $uv$ and an edge $vz$ for an arbitrary $z$.

\textbf{Case 4.1.1.} There is a deep blue edge $e_1=uv$, and another edge $e_2=wz$ is contained in at most one red hyperedge $wzx$. Note that $e_1$ and $e_2$ may share a vertex, or $x$ can be one of $u$ or $v$. The edges $e_1$ and $e_2$ will correspond to the two independent or adjacent edges described in \textbf{(ii)} of Lemma \ref{ujfa}, depending on if $e_1$ and $e_2$ are independent. Let $e_1'$ and $e_2'$ be those two edges of $T_1$. As $T_1$ is not a star and has at least six vertices, there is a vertex $x'$ in it that is connected to an endpoint of only one of $e_1'$ and $e_2'$, say $e_1'$. Then we let $e_1$ correspond to $e_1'$ and $e_2$ correspond to $e_2'$. 
Let $x$ (if exists and is different from $u$ and $v$) correspond to $x'$. 
For every other vertex of $T_1$, we identify it with an arbitrary other vertex in $\cK_n^3$. This way we choose a copy of $T_1$ in the shadow graph.

Now we are going to build a bijection $g$ from the edges of this copy of $T_1$ to the blue hyperedges containing them. We know that if $x$ exists and is different from $u$ and $v$, then either $xv$ or $xu$ is an edge $e_0$ of this copy of $T_1$,
and we let $g(e_0)=uvx$.
For another edge $e$ in $T_1$, one of its endpoints $y$ is in $e_i$ ($i\le 2$). If the other endpoint $y'$ is not in $e_1\cup e_2$, then let $g(e):=e_i\cup \{y'\}$. If $y$ is in both $e_1$ and $e_2$, this does not give a complete definition; in that case we let $g(e)=e_1\cup\{y'\}$. If $y$ is in $e_1$ and $y'$ is in $e_2$ (but $e_1\neq e\neq e_2$), then let $g(e):=e_1\cup \{y'\}$. 

Observe that we picked distinct hyperedges so far. Indeed, if $e_i\cup \{y'\}$ is picked twice for some $i$, then $y'$ is connected to both endpoints of $e_i$, thus there is a triangle in $T_1$, a contradiction. Furthermore, we picked only blue hyperedges so far, as each of them contains $e_1$ or $e_2$, and $wzx=e_2\cup \{x\}$ has not been picked (as $x$ is not connected to an endpoint of $e_2$).

We still have to pick $g(e_1)$ and $g(e_2)$. At this point $n-3$ blue hyperedges have been picked as $g(e)$ for some edge $e$ of the copy of $T_1$. The edge $e_2$ is contained in either $n-3$ blue hyperedges (and in this case we have picked $g(e_0)=uvx$ which does not contain it) or $n-2$ blue hyperedges. In both cases there is a blue hyperedge containing $e_2$ that is not $g(e)$ for any $e \in E(T_1)$ so far, let $g(e_2)$ be that hyperedge. Finally, $e_1$ is contained in $n-2$ blue hyperedges, thus we are done, unless $e_1$ is contained in $g(e)$ for every edge $e$ of our copy of $T_1$. 

In particular this means that $g(e_2)$ contains $e_1$, thus $e_2$ shares a vertex $u=w$ with $e_1$. Also, every other vertex in the copy of $T_1$ is connected to $u$ or $v$. If $x$ exists, it cannot be connected to $u$ by the definition of $x'$. If there is a vertex $y$ with $v\ne y\neq z$ connected to $u$, then we change $g$. We let $g(yu)=uzy$ instead of $uvy$, and let $g(e_1)=uvy$. Observe that $uzy$ is blue, since it contains $e_2$ but not $x$. Furthermore, $uzy$ was not $g(e)$ for any edge $e$ of the copy of $T_1$ originally, as $y$ is connected only to $u$, and $g(uy)$ was $e_1\cup \{y\}$. Thus we found distinct blue hyperedges containing each edges of a copy of $T_1$, thus a monoblue Berge-$T_1$, a contradiction. 

Finally, if there is no vertex $y$ connected to $u$ (besides $v$ and $z$), then $T_1$ consists of a star with center $v$ and one additional edge $uw$ attached to a neighbor $u$ of $v$. This is the case we specified at the beginning of Case 4.1, in this case we would not pick $uw$ as $e_2'$, a contradiction. 


\textbf{Case 4.1.2.} There is a deep blue edge, and every other edge is contained in at least two red hyperedges. Then Lemma \ref{philip2} applied to the hypergraph consisting of the red hyperedges finishes the proof.

\textbf{Case 4.2.} $T_1=T_2=P_6$. 
We follow the argument in Case 4.1. 
$P_6$ does not satisfy \textbf{(ii)} of Lemma \ref{ujfa}, because although we can find two independent edges in it such that every other edge shares a vertex with one of them, we cannot find two adjacent ones. Thus we can apply the proof of Case 4.1.1 in case the edges $e_1$ and $e_2$ are independent, and apply the proof of Case 4.1.2 always. It means that if if $uv$ is a deep blue edge, then every edge independent from it is contained in at least two red hyperedges. Let $xz$ be a deep red edge.

Then we can embed $P_6$ in the order $uxzwyv$, where $w$ and $y$ are the other two vertices. The edges of this $P_6$ are contained in at least $1$, $4$, $2$, $2$ and $1$ red hyperedges in this order. Now we are going to build a bijection $g$ from the edges of this copy of $P_6$ to the red hyperedges containing them. First we choose $g(ux)$ and $g(yv)$ arbitrarily from the red hyperedges containing them. Then we pick $g(wy)$. It is possible that one of the red hyperedges containing $wy$ is $g(yz)$, but we still have another one that we can pick as $g(wy)$. Similarly we have an unused red hyperedge to pick as $g(zw)$ and at least two choices for $g(xz)$.


\textbf{Case 4.3.} $T_1$ and $T_2$ both satisfy \textbf{(i)} of Lemma \ref{ujfa}. Thus we can delete $v_i$ and the neighboring leaves from $T_i$ to obtain $T_i'$, for $i=1,2$. Let $n'$ be the number of vertices of the larger of the trees $T_1'$ and $T_2'$. Then we delete $n-n'$ vertices from the blue-red $\cK_n^3$ such that one of them $x$ is incident to a deep blue edge $xx'$ and another one $y$ is incident to a deep red edge $yy'$, and we do not delete $x'$ nor $y'$. Let $Q$ be the set of $n'$ vertices we do not delete.

If$n'\ge 6$, then we can apply induction and find, say, a monoblue Berge-$T_1'$ on the remaining vertices. That means we fins a copy of $T_1'$ in the shadow graph of the blue-red $\cK_{n'}^3$ with vertex set $Q$, such that there are distinct blue hyperedges containing its edges. In case $n'=5$, then $T_1'$ and $T_2~$ are not stars, thus we can similarly find a monochromatic Berge-$T_1'$ by Lemma \ref{kisfa}.

The copy of $T_1'$ found this way has a vertex $z$ corresponding to the vertex $u$ in \textbf{(i)} of Lemma \ref{ujfa}, i.e. adding an edge $zx$ and the edges $xx_1,xx_2,\dots, xx_k$ would result in a copy of $T_1$, where the $x_i$'s are all the vertices of the original blue-red $K_n^3$ that are not in $Q$. For these edges, we consider the hyperedges $zxx'$ and $xx'x_i$ (if $z\neq x'\neq x_i$ for every $i$). These are all blue as $xx'$ is deep blue, and they are obviously distinct from each other (they are also obviously distinct from the blue hyperedges used earlier, as they use only vertices from $Q$).

In case $x'=z$ or $x'=x_i$ for some $i$, the edge $xx'$ is not matched with a blue hyperedge. In that case we can choose an arbitrary vertex $w$ of $Q$ and for the edge $xx'$ we use the hyperedge $xx'w$. This new hyperedge is distinct from the others we picked, as the ones we picked first are inside $Q$, while the ones we picked later share only $x$ with $Q$. The hyperedge $xx'w$ is blue as $xx'$ is deep blue, giving us a monoblue Berge-$T_1$, a contradiction. In case we find a monored Berge-$T_2'$ by induction, we proceed similarly.

\textbf{Case 4.4.} $T_1$ satisfies \textbf{(i)} but not \textbf{(ii)} of Lemma \ref{ujfa}, and $T_2=P_6$. In this case we find $T_1'$ as in the previous case and let $T_2'=T_2$. We proceed as in Case 4.3. Note that by the assumption on $T_1$, we have $n>6$, which implies $n>n'$, thus we can use induction. From this point the proof is exactly the same as in Case 4.3, except that if the induction finds a monored $T_2'$, we are already done without any further steps. 

\end{proof}

\subsection{Proof of Proposition \ref{vercov}}


We restate Proposition \ref{vercov} below for convenience.

\begin{proposition*} If $G$ is a graph on $n$ vertices with vertex cover number $k\ge 3$, then $$2k-1\le R(B^3G,B^3G)\le n-k+2R(K_k,K_k).$$

\end{proposition*}

\begin{proof} 
For the upper bound, we set aside a set $A$ of $2R(K_k,K_k)-1$ vertices, let $B$ be the set of remaining vertices. Consider all the link graphs of the vertices in $A$, restricted to $B$, with the inherited coloring. These link graphs each have a monochromatic spanning tree, thus at least $R(K_k,K_k)$ of them have, say, a blue spanning tree. Let $A'\subset A$ be a subset of size $R(K_k,K_k)$, such that the link graph of each of its elements has a blue spanning tree. Let us pick $v\in B$ and apply Lemma \ref{feszfa} to the blue spanning trees of these link graphs. This way for every vertex in $B':=B\setminus \{v\}$ we found $R(K_k,K_k)$ blue edges containing them, such that they, extended with the corresponding vertex of $A'$ give a blue Berge-$K_{R(K_k,K_k),n-k}$. 

Let us color an edge inside $A'$ blue if it is contained in at least three blue hyperedges, and red otherwise. Note that this definition is not symmetric. By the definition of the Ramsey number we can find a monochromatic clique of size $k$ with this coloring. Assume it is blue. By Lemma \ref{philip} this gives a blue Berge-$K_k$ on those vertices. Deleting the other vertices of $A'$ we obtain a blue Berge copy of a graph on $n$ vertices with $k$ vertices having degree $n-1$. This obviously contains $G$. Note that the hyperedges used in the first part of the proof contained exactly one vertex from $A$, while the hyperedges used in the second part contain at least two, thus they are distinct.

Hence we can assume we obtain a red Berge-$K_k$ on the vertex set $A''=\{v_1,\dots,v_k\}$. It means at most $2\binom{k}{2}$ other vertices can be contained in a red hyperedge together with any of the edges inside $A''$. Let us delete those vertices and consider the set $B''$ of the remaining at least $n-k+1$ vertices, let $u$ be one of them. For two vertices $v_i,v_j\in A'$ we take the red hyperedge $\{u,v_i,v_j\}$. For vertices $v_i\in A'$, $w\in B''\setminus\{u\}$ we take the red hyperedge $\{v_i,v_{i+1},w\}$, where $i+1$ is taken modulo $k$. It is easy to see that we took distinct hyperedges, and they form a red Berge copy of a graph on $n$ vertices with $k$ vertices having degree $n-1$. 



For the lower bound, we consider a complete $3$-graph on $2k-2$ vertices and partition it into two parts $A$ and $B$ of size $k-1$. Every hyperedge intersects one of the parts in at least two vertices. We color it red if it is part $A$ and blue otherwise. Then any red edge is incident to one of the $k-1$ vertices in part $A$ and every blue edge is incident to one of the $k-1$ vertices in part $B$, thus any monochromatic graph has vertex cover number less than $k$.
\end{proof}

\section{Concluding remarks}

Let us note that every Berge copy of a connected graph is a connected hypergraph. Therefore, an upper bound on the size of the largest monochromatic component of every $c$-colored $\cK_n^r$ can give lower bounds in Berge Ramsey problems. There are some strong results for different values of $c$ and $r$, see \cite{furgya,gyarfas,gyh}. For example, there is a $4$-colored complete $3$-graph such that the largest monochromatic component has size $3n/4+o(n)$ by \cite{gyarfas}. This implies that for any connected graph $G$ on $n$ vertices, we have $R^4(B^3G)\ge 4n/3-o(n)$. However, this does not help us obtain a lower bound in case $c=r=3$, as in that case there is always a monochromatic spanning subhypergraph.

\section*{Acknowledgements}

We thank the anonymous reviewers for their careful reading of our manuscript and their many insightful comments and suggestions improving the presentation of our article.

\ 

Research of Gerbner was supported by the J\'anos Bolyai Research Fellowship of the Hungarian Academy of Sciences and by the National Research, Development and Innovation Office -- NKFIH, grant K 116769, KH 130371 and SNN 129364.

\noindent
Research of Methuku was supported by the National Research, Development and Innovation Office -- NKFIH, grant K 116769.

\noindent
Research of Vizer was supported by the National Research, Development and Innovation Office -- NKFIH, grant SNN 116095, 129364, KH 130371 and by the J\'anos Bolyai Research Fellowship of the Hungarian Academy of Sciences.

\end{document}